\titlespacing*{\section}{0pt}{1.25\baselineskip}{0.25\baselineskip}
\titlespacing*{\subsection}{0pt}{0.75\baselineskip}{0.125\baselineskip}
\titlespacing*{\subsubsection}{0pt}{0.5\baselineskip}{0.125\baselineskip}
\titlespacing*{\paragraph}{0pt}{0.25\baselineskip}{0.25\baselineskip}
\g@addto@macro\normalsize{%
  \setlength\abovedisplayskip{5pt}
  \setlength\belowdisplayskip{5pt}
  \setlength\abovedisplayshortskip{5pt}
  \setlength\belowdisplayshortskip{5pt}
}
\patchcommand\@starttoc{\begin{quote}}{\end{quote}}
\setlist[description]{style=multiline,topsep=4pt,align=parright}
\let\reftagform@=\tagform@
\def\tagform@#1{\maketag@@@{(\ignorespaces\textcolor{black}{#1}\unskip\@@italiccorr)}}
\newcommand{\iref}[1]{\textup{\reftagform@{\tcr{\ref{#1}}}}}
\newcommand{\pushright}[1]{\ifmeasuring@#1\else\omit\hfill$\displaystyle#1$\fi\ignorespaces}
\newcommand{\pushleft}[1]{\ifmeasuring@#1\else\omit$\displaystyle#1$\hfill\fi\ignorespaces}
\begin{document}
	

\title{Improving ``Fast Iterative Shrinkage-Thresholding Algorithm'': Faster, Smarter and Greedier}
\author{Jingwei Liang\thanks{School of Mathematical Sciences, Queen Mary University of London, London UK. E-mail: jl993@cam.ac.uk.}
		\and
		Tao Luo\thanks{School of Mathematical Sciences, Shanghai Jiao Tong University, Shanghai China. E-mail: luotao41@sjtu.edu.cn.}
		\and
		Carola-Bibiane Sch{\"{o}}nlieb\thanks{DAMTP, University of Cambridge, Cambridge UK. E-mail: cbs31@cam.ac.uk.}}
\date{}
\maketitle

\begin{abstract}
The ``fast iterative shrinkage-thresholding algorithm'', a.k.a. FISTA, is one of the most well-known first-order optimization scheme in the literature, as it achieves the worst-case $O(1/k^2)$ optimal convergence rate in terms of objective function value. 
However, despite such an optimal theoretical convergence rate, in practice the (local) oscillatory behavior of FISTA often damps its efficiency. 
Over the past years, various efforts are made in the literature to improve the practical performance of FISTA, such as monotone FISTA, restarting FISTA and backtracking strategies. 
In this paper, we propose a simple yet effective modification to the original FISTA scheme which has two advantages: it allows us to 1) prove the convergence of generated sequence; 2) design a so-called ``lazy-start'' strategy which can be up to an order faster than the original scheme. 
Moreover, we propose novel adaptive and greedy strategies which probe the limit of the algorithm. 
The advantages of the proposed schemes are tested through problems arising from inverse problem, machine learning and signal/image processing. 
\end{abstract}

\begin{keywords}
FISTA, inertial Forward--Backward, lazy-start strategy, adaptive and greedy acceleration
\end{keywords}
\begin{AMS}
	65K05, 65K10, 90C25, 90C31.
\end{AMS}




\section{Introduction}\label{sec:introduction}

The acceleration of first-order optimization methods is an active research topic of non-smooth optimization. Over the past decades, various acceleration techniques are proposed in the literature. Among them, one most widely used is called ``inertial technique'' which dates back to \cite{polyak1964some} where Polyak proposed the so called ``heavy-ball method'' which dramatically speeds up the practical performance of gradient descent. 
In a similar spirit, in~\cite{nesterov83} Nesterov proposed another accelerated scheme which improves the $O(1/k)$ objective function convergence rate of gradient descent to $O(1/k^2)$. 
The extension of \cite{nesterov83} to the non-smooth case was due to \cite{fista2009} where Beck and Teboulle proposed the FISTA scheme which is the main focus of this paper. 

In this paper, we are interested in the following structured non-smooth optimization problem, which is the sum of two convex functionals, 
\beq\label{eq:min-problem}\tag{$\mathcal{P}$}
\min_{x\in\calH} ~~\Phi(x) \eqdef F(x) + R(x)  ,
\eeq
where $\calH$ is a real Hilbert space. The following assumptions are assumed throughout the paper
\begin{enumerate}[leftmargin=3.75em,label= ({\textbf{H.\arabic{*}})},ref= \textbf{H.\arabic{*}}]
\item \label{A:R} $R : \calH \to ]-\infty, +\infty]$ is proper, convex and lower semi-continuous (lsc);
\item \label{A:F} $F : \calH \to ]-\infty, +\infty[$ is convex and differentiable, with gradient $\nabla F$ being $L$-Lipschitz continuous for some $L > 0$;
\item \label{A:argmin} The set of minimizers is non-empty, \ie $\Argmin(\Phi) \neq \emptyset$.
\end{enumerate}
Problem \eqref{eq:min-problem} covers many problems arising from inverse problems, signal/image processing, statistics and machine learning, to name few. We refer to Section \ref{sec:experiment} the numerical experiment section for concrete examples.

\subsection{Forward--Backward-type splitting schemes}

In the literature, one widely used algorithm for solving \eqref{eq:min-problem} is Forward--Backward splitting (FBS) method~\cite{lions1979splitting}, which is also known as \emph{proximal gradient descent}.

\paragraph{Forward--Backward splitting}

With initial point $x_{0} \in \calH$ chosen arbitrarily, the standard FBS iteration without relaxation reads as
\beq\label{eq:fbs}
\xkp \eqdef \prox_{\gamma_k R} \Pa{\xk - \gamma_k \nabla F(\xk)} ,~~ \gamma_k \in ]0, 2/L]  ,
\eeq
where $\gamma_k$ is the step-size, and $\prox_{\gamma R}$ is called the \emph{proximity operator} of $R$ defined by 
\beq\label{eq:prox}
\prox_{\gamma R} (\cdot) \eqdef \argmin_{x\in\calH} \gamma R(x) + \sfrac{1}{2}\norm{x-\cdot}^2	.
\eeq
%
Similar to gradient descent, FBS is a descent method, that is the objective function value $\Phi(\xk)$ is non-increasing under properly chosen step-size $\gamma_k$. The convergence properties of FBS are well established in the literature, in terms of both sequence and objective function value:
\begin{itemize}
	\item The convergence of the generated sequence $\seq{\xk}$ and the objective function value $\Phi(\xk)$ are guaranteed as long as $\gamma_k$ is chosen such that $0 < \underline{\gamma} \leq \gamma_k \leq \bar{\gamma} < \sfrac{2}{L}$ \cite{combettes2005signal}.  
	\item Convergence rate: we have $\Phi(\xk) - \min_{x\in\calH}\Phi(x) = o(1/k)$ for the objective function value~\cite{molinari2018convergence} and $\norm{\xk-\xkm} = o(1/\sqrt{k})$ for the sequence $\seq{\xk}$~\cite{liang2016convergence}. Moreover, linear convergence rate can be obtained under for instance strong convexity.
\end{itemize}
Over the years, numerous variants of FBS have been proposed under different purposes, below we particularly focus on its inertial accelerated variants.

%
%
%


\paragraph{Inertial Forward--Backward}

The first inertial Forward--Backward was proposed by Moudafi and Oliny in~\cite{moudafi2003convergence}, under the setting of finding zeros of monotone inclusion problems. Specifying the algorithm to the case of solving \eqref{eq:min-problem}, we obtain the following iteration: 
\beq\label{eq:ifb}
\begin{aligned}
	\yk &= \xk + \ak (\xk - \xkm)	,	\\
	\xkp &= \prox_{\gamma_k R} \Pa{\yk - \gamma_k \nabla F(\xk)}  ,~ \gamma_k \in ]0, 2/L[ ,
\end{aligned}
\eeq
where $\ak$ is the \emph{inertial parameter} which controls the momentum $\xk-\xkm$. 
The above scheme recovers the heavy-ball method when $R = 0$ \cite{polyak1987introduction}, and becomes the scheme of \cite{lorenz2015inertial} if we replace $\nabla F(\xk)$ with $\nabla F(\yk)$. We refer to \cite{liang2017activity} for a more general discussion of inertial Forward--Backward splitting schemes.

The convergence of \eqref{eq:ifb} can be guaranteed under proper choices of $\gamma_k$ and $\ak$. Under the same step-size choice, \eqref{eq:ifb} could be significantly faster than FBS in practice. 
However, except for special cases (\eg quadratic problem as in \cite{polyak1987introduction}), in general there is no convergence rate established for \eqref{eq:ifb}.

\paragraph{The original FISTA}

By the form of iteration, FISTA is a particular example of the class of inertial FBS algorithms. What differentiates FISTA from others is the restriction on step-size $\gamma_k$ and special rule for updating $\ak$. Moreover, FISTA schemes have convergence rate guarantee on the objective function value, which is the consequence of $\ak$ updating rule. 
The original FISTA scheme of \cite{fista2009} is described below in Algorithm~\ref{alg:fista}.

\begin{center}
\begin{minipage}{0.925\linewidth}
\begin{algorithm}[H]
\caption{The original FISTA scheme (FISTA-BT)} \label{alg:fista}
{\noindent{\bf{Initial}}}: $t_0 = 1$, $\gamma = 1/L$ and $x_{0} \in \calH, x_{-1} = x_{0}$, $ k = 1$. \\ 
\Repeat{convergence}{ \vspace{-1em}
\beq \label{eq:fista-bt}
\begin{gathered}
\textstyle \tk = \frac{1 + \sqrt{1 + 4\tkm^2}}{2}  , ~~~  \ak = \frac{\tkm-1}{\tk}  ,  \\
\yk = \xk + \ak\pa{\xk-\xkm}    ,  \\
\xkp = \prox_{\gamma R}\Pa{\yk - \gamma \nabla F(\yk)} . 
\end{gathered}
\eeq
$k = k + 1$\;
}
\end{algorithm}
\end{minipage}
\end{center}

As described, FISTA first computes $\tk$ and then updates $\ak$ with $\tk$ and $\tkm$. Due to the choices of parameters, FISTA achieves $O(1/k^2)$ convergence rate for $\Phi(\xk) - \min_{x\in\calH}\Phi(x)$ which is optimal \cite{nemirovsky1983problem}. 
For the rest of the paper, to distinguish the original FISTA from the one in~\cite{chambolle2015convergence} and the proposed modified FISTA scheme, we shall use ``FISTA-BT'' to refer Algorithm~\ref{alg:fista}.

\paragraph{A sequence-convergent FISTA}

Although achieving optimal convergence rate for objective function value, the convergence of the sequence $\seq{\xk}$ generated by Algorithm~\ref{alg:fista} was initially an open problem. 
This question was answered in \cite{chambolle2015convergence}, where Chambolle and Dossal proved the convergence of $\seq{\xk}$ by considering the following rule to update $\tk$: let $d > 2$ and
\beq\label{eq:fista-cd}
\tk = \sfrac{k+d}{d}  ,~~~
\ak = \sfrac{\tkm-1}{\tk} = \sfrac{k-1}{k+d}  .
\eeq
Such a rule maintains the $O(1/k^2)$ objective convergence rate, and also allows the authors to prove the convergence of $\seq{\xk}$. Later on in \cite{AttouchFISTAGrad15}, \eqref{eq:fista-cd} was studied under the continuous time dynamical system setting, and the convergence rate of objective function is proved to be $o(1/k^2)$~\cite{AttouchFISTA15}. 
For the rest of the paper, we shall use ``FISTA-CD'' to refer to \eqref{eq:fista-cd}.

\subsection{Problems}

Although theoretically FISTA-BT achieves the optimal $O(1/k^2)$ convergence rate, in practice it could be even slower than the non-accelerated Forward--Backward splitting scheme, which is mainly caused by the oscillatory behavior of the scheme \cite{liang2017activity}. 
In the literature, several modifications of FISTA-BT are proposed to deal with such oscillation, such as the monotone FISTA \cite{beck2009fast} and restarting FISTA \cite{o2012adaptive}. 
Other work includes FISTA-CD~\cite{chambolle2015convergence} for the convergence of iterates, and a backtracking strategy for adaptive Lipschitz constant estimation~\cite{calatroni2017backtracking}. 
Despite these works, there are still important questions to answer:
\begin{itemize}
	\item 
	Although \cite{chambolle2015convergence} proves the convergence of the iterates $\seq{\xk}$ under $\tk$ updating rule \eqref{eq:fista-cd}, the convergence of $\seq{\xk}$ for the original FISTA-BT remains unclear. 
	\item 
	The practical performance of FISTA-CD is almost identical to FISTA-BT if $d$ of \eqref{eq:fista-cd} is chosen close to $2$. However, when relatively large values of $d$ are chosen, significant practical acceleration can be obtained. For instance, it is reported in \cite{liang2017activity} that for $d = 50$ the resulted performance can be several times faster than $d=2$. However, there is no proper theoretical justifications on how to choose the value of $d$ in practice. 
	\item
	When the problem \eqref{eq:min-problem} is strongly convex, there exists an optimal choice for $\ak$~\cite{nesterov2004introductory}. However, in practice, very often the problem is only locally strongly convex with unknown strong convexity, and estimating the strong convexity could be time consuming. This leads to the question of whether there is a low-complexity approach to estimate strong convexity, or do we really need a tight estimation of it?
	\item 
	Restarting FISTA successfully suppresses the oscillatory behavior of FISTA schemes, hence achieving much faster practical performance. Can we further improve this scheme? 
\end{itemize}

\subsection{Contributions}

The above questions are the main motivations of this paper, and our contributions are summarized below.

\paragraph{A sequence-convergent FISTA scheme}
By studying the $\tk$ updating rule \eqref{eq:fista-bt} of FISTA-BT and its difference with~\eqref{eq:fista-cd}, we propose a modified FISTA scheme which applies the following rule,
\beq\label{eq:pqr-sec1}
\textstyle \tk = \frac{p + \sqrt{q + r\tkm^2}}{2}  , ~~~  \ak = \frac{\tkm-1}{\tk} ,
\eeq
where $p, q \in ]0, 1]$ and $r \in ]0, 4]$, see also Algorithm \ref{alg:fista-mod}. 
Such a modification has two advantages when $r=4$,
\begin{itemize}
	\item It maintains the $O(1/k^2)$ (actually $o(1/k^2)$) convergence rate of the original FISTA-BT (Theorem~\ref{thm:rate-obj});
	\item It allows us to prove the convergence of the iterates $\seq{\xk}$ (Theorem~\ref{thm:rate-seq});
\end{itemize}
It also allows us to show that the original FISTA-BT is also optimal in terms of the constant which appears in the $O(1/k^2)$ rate, see \eqref{eq:big-O-obj} in Theorem~\ref{thm:rate-obj}.

\paragraph{Lazy-start strategy}
For the proposed scheme and FISTA-CD, owing to the free parameters in computing $\tk$, we propose in Section~\ref{sec:lazy} a so-called ``lazy-start'' strategy for practical acceleration. 
The idea of such strategy is to slow down the speed of $\ak$ approaching $1$, which can lead to a faster practical performance. For certain problems, such a strategy can be an order faster than the original schemes, see Section~\ref{sec:experiment} for illustration. 
For least squares problems, we show that theoretically there exists optimal choices for $\ak$ update which only depends on the stopping criteria. 

\paragraph{Adaptive and greedy acceleration}
Although the lazy-start strategy can significantly speed up the performance of FISTA, it still suffers the oscillatory behavior since the inertial parameter $\ak$ eventually converges to $1$. 
By combining with the restarting technique of \cite{o2012adaptive}, in Section~\ref{sec:adaptive} we propose two different acceleration strategies: restarting adaptation to (local) strong convexity and greedy scheme.

The oscillatory behavior of FISTA schemes is often related to strong convexity. When the problem is strongly convex, there exists an optimal choice $\asol < 1$ for $\ak$ \cite{nesterov2004introductory}, 
Moreover, under such $\asol$ the iteration will no longer oscillate. 
Many problems in practice are only locally strongly convex however, estimating strong convexity in general is time consuming. 
Therefore in Section~\ref{sec:adaptive}, we propose an adaptive scheme (Algorithm~\ref{alg:rada-fista}) which combines the restarting technique \cite{o2012adaptive} and parameter update rule \eqref{eq:pqr-sec1}. Such an adaptive scheme avoids the direct estimation of strong convexity and achieve state-of-the-art performance.

We also investigate the mechanism of oscillation and the restarting technique, and propose a greedy scheme (see Algorithm~\ref{alg:greedy}) which uses aggressive inertial parameter (\eg $\ak \geq 1$) and step-size (\eg $\gamma \geq 1/L$), hence probing the limit of the restarting technique. Doing so, the greedy scheme can achieve a faster practical performance than the restarting FISTA of \cite{o2012adaptive}.

\paragraph{Nesterov's accelerated schemes}
Given the close relation between FISTA and the Nesterov's accelerated schemes~\cite{nesterov2004introductory}, we also extend the above results, particularly the modified FISTA to Nesterov's schemes. 
Such an extension can also significantly improve the performance when compared to the original schemes.

\subsection{Paper organization}
The rest of the paper is organized as follows. 
Some notation and preliminary results are collected in Section~\ref{sec:preliminary}. 
The proposed sequence-convergent FISTA scheme is presented in Section~\ref{sec:fista-mod}. The lazy-start strategy and the adaptive/greedy acceleration schemes are presented in Section~\ref{sec:lazy} and Section~\ref{sec:adaptive} respectively. In Section~\ref{sec:nesterov}, we extend the results to Nesterov's accelerated schemes. Numerical experiments are presented in Section~\ref{sec:experiment}.


\section{Preliminaries}\label{sec:preliminary}

Throughout the paper, $\calH$ is a real Hilbert space equipped with scalar product $\iprod{\cdot}{\cdot}$ and norm $\norm{\cdot}$. $\Id$ denotes the identity operator on $\calH$.
%
%
$\bbN$ is the set of non-negative integers and $k \in \bbN$ is the index, $\xsol \in \Argmin(\Phi)$ denotes a global minimizer of \eqref{eq:min-problem}. 


The sub-differential of a proper convex and lower semi-continuous function $R: \calH \to ]-\infty, +\infty]$ is a set-valued mapping defined by
\beq\label{eq:partial-R}
\partial R :  \calH \setvalued \calH ,~ x\mapsto \Ba{ g\in\calH ~|~ R(x') \geq R(x) + \iprod{g}{x'-x} ,~~ \forall x' \in \calH }.
\eeq

\begin{definition}[Monotone operator]\label{def:mon-opt}
	A set-valued mapping $A : \calH \setvalued \calH$ is said to be monotone if, 
	\beq\label{eq:monotone}
	\iprod{x_1-x_2}{v_1-v_2} \geq 0  ,\quad \forall~ v_1 \in A(x_1) ~~\textrm{and}~~ v_2 \in A(x_2)  .
	\eeq
	It is maximal monotone if the graph of $A$ can not be contained in the graph of any other monotone operators.
\end{definition}
It is well-known that for proper, convex and lower semi-continuous function $R: \calH \to ]-\infty, +\infty]$, its sub-differential is maximal monotone \cite{rockafellar1997convex}, and that $\prox_{R}=(\Id + \partial R)^{-1}$.

\begin{definition}[Cocoercive operator]\label{def:coco-opt}
	Let $\beta\in ]0,\pinf[$ and $B:\calH\rarrow\calH$, then $B$ is $\beta$-cocoercive if 
	\beq\label{eq:cocoercive}
	\iprod{B(x_1)-B(x_2)}{x_1-x_2} \geq \beta\norm{B(x_1)-B(x_2)}^2   ,~~ \forall x_1, x_2 \in\calH .
	\eeq
\end{definition}


The $L$-Lipschitz continuous gradient $\nabla F$ of a convex continuously differentiable function $F$ is $\frac{1}{L}$-cocoercive~\cite{baillon1977quelques}.


\begin{lemma}[Descent lemma \cite{bertsekas1999nonlinear}]\label{lem:descent}
	Suppose that $F: \calH \to \bbR$ is convex, continuously differentiable and $\nabla F$ is $L$-Lipschitz continuous. Then, given any $x,y\in\calH$,
	\[
	F(x) \leq F(y) + \iprod{\nabla F(y)}{x-y} + \sfrac{L}{2}\norm{x-y}^2  .
	\]
\end{lemma}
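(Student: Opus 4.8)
The plan is to reduce the inequality to a one–dimensional computation along the segment joining $y$ to $x$. Fix $x, y \in \calH$ and set $\phi : [0,1] \to \bbR$, $\phi(t) \eqdef F\pa{y + t(x-y)}$. Since $F$ is continuously differentiable, the chain rule shows $\phi$ is differentiable on $[0,1]$ with $\phi'(t) = \iprod{\nabla F\pa{y+t(x-y)}}{x-y}$, and $\phi'$ is continuous because $\nabla F$ is. Hence the fundamental theorem of calculus gives
\[
F(x) - F(y) \;=\; \phi(1) - \phi(0) \;=\; \int_0^1 \iprod{\nabla F\pa{y+t(x-y)}}{x-y}\, dt .
\]

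Next I would subtract off the first–order term: writing $\iprod{\nabla F(y)}{x-y} = \int_0^1 \iprod{\nabla F(y)}{x-y}\, dt$ and merging the two integrals,
\[
F(x) - F(y) - \iprod{\nabla F(y)}{x-y} \;=\; \int_0^1 \iprod{\nabla F\pa{y+t(x-y)} - \nabla F(y)}{x-y}\, dt .
\]
Then I would estimate the integrand by Cauchy--Schwarz followed by the $L$-Lipschitz continuity of $\nabla F$: for every $t \in [0,1]$,
\[
\iprod{\nabla F\pa{y+t(x-y)} - \nabla F(y)}{x-y} \;\leq\; \norm{\nabla F\pa{y+t(x-y)} - \nabla F(y)}\,\norm{x-y} \;\leq\; L t\,\norm{x-y}^2 ,
\]
and since $\int_0^1 L t\, dt = \sfrac{L}{2}$, integrating both sides over $[0,1]$ yields the claimed bound.

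The only step needing any care is the passage through the fundamental theorem of calculus in the Hilbert-space setting, i.e.\ that $t \mapsto F(y+t(x-y))$ is continuously differentiable with the stated derivative; this is immediate from differentiability of $F$ together with continuity of $\nabla F$. Note in particular that convexity of $F$ plays no role in this lemma — only the Lipschitz gradient is used — and everything beyond the FTC step is routine.
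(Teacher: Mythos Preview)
Your argument is correct and is the standard textbook proof of the descent lemma. The paper does not supply its own proof of this statement: it is stated as Lemma~\ref{lem:descent} with a citation to \cite{bertsekas1999nonlinear} and then used without further justification, so there is nothing to compare against beyond noting that your derivation is exactly the one found in that reference. Your remark that convexity is not needed here is also accurate.
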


Given any $x,y\in\calH$, define the energy function $E_{\gamma}(x, y)$ by
\[
\begin{aligned}
E_{\gamma}(x, y)
&\eqdef R(x) + F(y) + \iprod{x-y}{\nabla F(y)} + \sfrac{1}{2\gamma}\norm{x-y}^2  .
\end{aligned}
\]
It is obvious that $E_{\gamma}(x, y)$ is strongly convex with respect to $x$, hence denote the unique minimizer as 
\beq\label{eq:e_gamma}
\begin{aligned}
e_{\gamma}(y) 
\eqdef \argmin  \Ba{E_{\gamma}(x, y): x \in \bbR^n}  
&= \argmin_{x}\Ba{ \gamma R(x) + \sfrac{1}{2}\norm{x - \pa{y - \gamma \nabla F(y)}}^2 } \\
&= \prox_{\gamma R} \Pa{ y - \gamma \nabla F(y) } 	.
\end{aligned}
\eeq
The optimality condition of $e_{\gamma}(y) $ is described below. 

\begin{lemma}[Optimality condition of $e_{\gamma}(y)$]\label{lem:opt-y}
	Given $y \in \calH$, let $y^+ = e_{\gamma}(y)$, then 
	\[
	0 
	\in \gamma \partial R(y^+) + \Pa{y^+ - \pa{y - \gamma \nabla F(y)}}
	= \gamma \partial R(y^+) + (y^+-y) + \gamma \nabla F(y)  .
	\]
\end{lemma}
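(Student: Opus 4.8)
The plan is to deduce the stated inclusion from Fermat's rule combined with the Moreau--Rockafellar sum rule for subdifferentials; no real obstacle is anticipated here, since the claim is essentially the standard variational characterisation of a proximal point, and the only subtlety is checking that the sum rule applies.

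First I would use the discussion preceding the statement to record that $y^+ = e_{\gamma}(y)$ is the \emph{unique} minimiser of the strongly convex map $x \mapsto E_{\gamma}(x, y)$, and that this minimiser coincides with the minimiser of
\[
f(x) \eqdef \gamma R(x) + \sfrac{1}{2}\norm{x - z}^2, \qquad z \eqdef y - \gamma \nabla F(y),
\]
because multiplying $E_{\gamma}(\cdot, y)$ by $\gamma$ and discarding the terms that do not depend on $x$ leaves the argmin unchanged (this is exactly the computation already displayed in \eqref{eq:e_gamma}). Fermat's rule then gives $0 \in \partial f(y^+)$.

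Next I would invoke the sum rule $\partial f = \partial(\gamma R) + \nabla\bigl(\sfrac{1}{2}\norm{\cdot - z}^2\bigr)$. This holds without any constraint qualification since the quadratic term is real-valued and continuous on all of $\calH$ while $\gamma R$ is proper convex lsc. Using $\partial(\gamma R) = \gamma\,\partial R$ for $\gamma > 0$ and the fact that the gradient of $x \mapsto \sfrac{1}{2}\norm{x - z}^2$ at the point $y^+$ equals $y^+ - z$, the inclusion becomes $0 \in \gamma\,\partial R(y^+) + (y^+ - z)$.

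Finally I would substitute $z = y - \gamma \nabla F(y)$ back in, so that $y^+ - z = (y^+ - y) + \gamma \nabla F(y)$, which produces both equalities asserted in the statement. The only step meriting an explicit remark is the legitimacy of the subdifferential sum rule, and that is guaranteed by the continuity of the quadratic term; the remainder is a direct rewriting.
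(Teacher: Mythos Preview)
Your proposal is correct; the argument via Fermat's rule and the Moreau--Rockafellar sum rule is exactly the standard derivation, and your justification of the sum rule (continuity of the quadratic term) is the right observation. The paper itself does not supply a proof for this lemma---it is stated as an immediate consequence of the definition of $e_{\gamma}(y)$ in \eqref{eq:e_gamma}---so your write-up simply makes explicit what the paper leaves implicit.
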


We have the following basic lemmas from \cite{fista2009}.

\begin{lemma}[{\cite[Lemma~2.3]{fista2009}}]\label{lem:ineq-y}
	Let $y \in \calH$ and $\gamma \in ]0, 2/L[$ such that
	\[
	\Phi(e_{\gamma}(y)) \leq E_{\gamma}(e_{\gamma}(y), y)  ,
	\]
	then for any $x \in \calH$, we have $\Phi(x) - \Phi(e_{\gamma}(y))
	\geq \frac{1}{2\gamma}\norm{e_{\gamma}(y) - y}^2 + \frac{1}{\gamma}\iprod{y-x}{e_{\gamma}(y) - y} $. 
\end{lemma}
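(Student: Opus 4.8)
The plan is to chain the hypothesis $\Phi(e_{\gamma}(y)) \leq E_{\gamma}(e_{\gamma}(y), y)$ with two subgradient inequalities and finish by an elementary rearrangement; this follows the argument of \cite[Lemma~2.3]{fista2009}. Write $y^+ = e_{\gamma}(y)$ for brevity. Since $\Phi(x) - \Phi(y^+) \geq \Phi(x) - E_{\gamma}(y^+, y)$ by hypothesis, it suffices to prove the bound
\[
\Phi(x) - E_{\gamma}(y^+, y) \geq \sfrac{1}{2\gamma}\norm{y^+-y}^2 + \sfrac{1}{\gamma}\iprod{y-x}{y^+-y} .
\]

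To get a lower bound on $\Phi(x) = F(x) + R(x)$, I would treat the two summands separately. For the smooth part, convexity of $F$ gives $F(x) \geq F(y) + \iprod{\nabla F(y)}{x-y}$. For the nonsmooth part, Lemma~\ref{lem:opt-y} says $-\sfrac{1}{\gamma}\big((y^+-y) + \gamma\nabla F(y)\big) \in \partial R(y^+)$, so the subgradient inequality for $R$ at $y^+$ yields $R(x) \geq R(y^+) - \sfrac{1}{\gamma}\iprod{(y^+-y) + \gamma\nabla F(y)}{x-y^+}$.

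Adding these two inequalities and subtracting $E_{\gamma}(y^+, y) = R(y^+) + F(y) + \iprod{y^+-y}{\nabla F(y)} + \sfrac{1}{2\gamma}\norm{y^+-y}^2$, the terms $R(y^+)$ and $F(y)$ cancel, and the terms carrying $\nabla F(y)$ collect to $\iprod{\nabla F(y)}{x-y} - \iprod{\nabla F(y)}{y^+-y} - \iprod{\nabla F(y)}{x-y^+}$, which is zero. What survives is $\Phi(x) - E_{\gamma}(y^+, y) \geq -\sfrac{1}{\gamma}\iprod{y^+-y}{x-y^+} - \sfrac{1}{2\gamma}\norm{y^+-y}^2$.

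Finally, substituting $x - y^+ = (x-y) - (y^+-y)$ into the inner product gives $\iprod{y^+-y}{x-y^+} = \iprod{y^+-y}{x-y} - \norm{y^+-y}^2$; plugging this in converts the right-hand side into exactly $\sfrac{1}{2\gamma}\norm{y^+-y}^2 + \sfrac{1}{\gamma}\iprod{y-x}{y^+-y}$, which is the claimed estimate. There is no genuine analytic obstacle here: the only thing requiring care is the bookkeeping in the cancellation of the $\nabla F(y)$ terms and reading off the correct sign of the subgradient from Lemma~\ref{lem:opt-y}. Note the restriction $\gamma < 2/L$ is not used in this argument itself; it enters only through the hypothesis $\Phi(e_{\gamma}(y)) \leq E_{\gamma}(e_{\gamma}(y),y)$, which is assumed given.
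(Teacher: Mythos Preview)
Your argument is correct and is precisely the standard proof of \cite[Lemma~2.3]{fista2009}, which the paper simply cites without reproducing. The bookkeeping for the $\nabla F(y)$ cancellation and the final rearrangement are both accurate.
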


\begin{lemma}[{\cite[Lemma~3.1]{chambolle2015convergence}}]\label{lem:energy-decreasing}
	Given $y \in \calH$ and $\gamma \in ]0, 1/L]$, let $y^+ = e_{\gamma}(y)$, then for any $x \in \calH$, we have
	\[
	\textstyle  \Phi(y^+) + \sfrac{1}{2\gamma} \norm{y^+ - x}^2
	\leq \Phi(y) + \sfrac{1}{2\gamma} \norm{y - x}^2  .
	\]
\end{lemma}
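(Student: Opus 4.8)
The plan is to read the claim as a one-step decrease of the augmented energy $\mathcal{E}_{x}(z) \eqdef \Phi(z) + \frac{1}{2\gamma}\norm{z-x}^2$, i.e. to establish $\mathcal{E}_{x}(y^+) \leq \mathcal{E}_{x}(y)$ (which unfolds to exactly the stated inequality), and to assemble it from the two structural ingredients already at hand: the descent lemma for the smooth part $F$, and the subgradient inequality for the nonsmooth part $R$, glued together through the optimality condition of Lemma~\ref{lem:opt-y}. I would first dispose of the smooth part. Since $\gamma \in ]0,1/L]$ we have $\frac{L}{2}\leq\frac{1}{2\gamma}$, so Lemma~\ref{lem:descent} applied to the pair $(y^+,y)$ gives
\[
F(y^+) \leq F(y) + \iprod{\nabla F(y)}{y^+-y} + \tfrac{1}{2\gamma}\norm{y^+-y}^2 .
\]
This is the only place where the step-size restriction $\gamma\leq 1/L$ enters: it lets me absorb the Lipschitz quadratic into the $\frac{1}{2\gamma}$-quadratic that must eventually recombine with the distance-to-$x$ terms.

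For the nonsmooth part I would invoke Lemma~\ref{lem:opt-y}, which produces a subgradient $\xi\in\partial R(y^+)$ satisfying $\gamma\xi = (y-y^+)-\gamma\nabla F(y)$, equivalently $\xi+\nabla F(y)=\frac{1}{\gamma}(y-y^+)$. Feeding $\xi$ into the convexity inequality \eqref{eq:partial-R} for $R$ evaluated at $y$ yields $R(y^+)\leq R(y)+\iprod{\xi}{y^+-y}$, and adding this to the descent bound for $F$ collapses the two cross terms, because $\xi+\nabla F(y)$ is exactly $\frac{1}{\gamma}(y-y^+)$. At this stage the smooth and nonsmooth contributions fuse into a single quadratic slack in $\norm{y^+-y}$ measured against $\Phi(y)$.

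The final step is the algebraic reorganisation that introduces the reference point $x$. Here I would use the three-point (polarisation) identity
\[
\tfrac{1}{2\gamma}\norm{y-x}^2 - \tfrac{1}{2\gamma}\norm{y^+-x}^2 = \tfrac{1}{2\gamma}\norm{y-y^+}^2 + \tfrac{1}{\gamma}\iprod{y-y^+}{y^+-x}
\]
to rewrite the difference of the two distance terms, and then match it against the quadratic slack obtained above so that the inequality closes in the asserted form $\Phi(y^+)+\frac{1}{2\gamma}\norm{y^+-x}^2\leq\Phi(y)+\frac{1}{2\gamma}\norm{y-x}^2$. I expect this reconciliation of the quadratic terms to be the main obstacle: the coefficient of $\norm{y-y^+}^2$ delivered by the descent lemma must line up with the one produced by the polarisation identity, and the residual cross term $\iprod{y-y^+}{y^+-x}$ must be disposed of using the sign information carried by the subgradient/optimality relation. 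Controlling this cross term — rather than the essentially mechanical application of the descent and convexity inequalities — is where the delicacy of the argument resides, and is the step I would write out most carefully.
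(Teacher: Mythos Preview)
There is a genuine gap, but its root cause is upstream of your argument: the stated inequality is a typo. The right-hand side should read $\Phi(x)$, not $\Phi(y)$; this is the form in \cite[Lemma~3.1]{chambolle2015convergence}, and it is also the form the paper actually uses (in the proof of Theorem~\ref{thm:rate-obj} the lemma is applied with $x=(1-\tfrac{1}{\tk})\xk+\tfrac{1}{\tk}\xsol$ and the right-hand side carries $\Phi$ of that point, not $\Phi(\yk)$). The version with $\Phi(y)$ is false: take $R=0$, $F=\tfrac12\norm{\cdot}^2$, $\gamma=1$, so $y^+=0$; the claim reduces to $\iprod{y}{x}\le\norm{y}^2$ for all $x$, which fails.

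This is exactly why your last step cannot close. After combining the descent lemma with the subgradient inequality at $y$ you obtain the sharp relation $\Phi(y^+)\le\Phi(y)-\tfrac{1}{2\gamma}\norm{y^+-y}^2$, with nothing left over. The polarisation identity then leaves the cross term $\tfrac{1}{\gamma}\iprod{y-y^+}{y^+-x}$ entirely uncontrolled: the optimality condition tells you $y-y^+=\gamma(\xi+\nabla F(y))$, but this carries no sign information against an arbitrary direction $y^+-x$, so there is no ``sign information carried by the subgradient/optimality relation'' to dispose of it. For the correct statement the fix is small but essential: apply the subgradient inequality for $R$ at the point $x$ (i.e.\ $R(x)\ge R(y^+)+\iprod{\xi}{x-y^+}$) and the convexity of $F$ at $x$ (i.e.\ $F(x)\ge F(y)+\iprod{\nabla F(y)}{x-y}$), then add the descent lemma. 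The resulting cross term is precisely $\tfrac{1}{\gamma}\iprod{y-y^+}{x-y^+}$, which the three-point identity converts into the two distance terms and the inequality closes exactly, with no residual to ``control''.
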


\section{A sequence-convergent FISTA scheme}\label{sec:fista-mod}

As we mentioned in the introduction, the main problems of the current FISTA schemes are caused by the behavior of $\ak$, that $\ak$ converges to $1$ too fast. As a result, we need some proper way to control this speed. For FISTA-CD, this can be achieved by choosing a relatively large value of $d$, while for FISTA-BT there is no option so far. 
In this section, we shall first discuss how to introduce control parameters to FISTA-BT which leads to a modified FISTA scheme, and then present convergence analysis.

\subsection{A modified FISTA}\label{sec:observations}

Recall the $\tk$ update rule of the original FISTA-BT \cite{fista2009}, 
\[
\textstyle    \tk = \frac{1+\sqrt{1 + 4 t^2_{k-1} }}{2} ,~~ \ak = \frac{\tkm - 1}{\tk}   .    
\]
In the following, we replace the constants $1,1$ and $4$ in the update of $\tk$ with three parameters $p,q$ and $r$ and study how they affect the behavior of $\tk$ and consequently $\ak$.

\paragraph{Observation I}
Consider first replacing $4$ with a non-negative $r$, we get
\beq\label{eq:fista-r}
\textstyle \tk = \frac{1+\sqrt{1 + \tcr{r} t^2_{k-1} }}{2} ,~~ \ak = \frac{\tkm - 1}{\tk}  .
\eeq
With simple calculation, we obtain: 
\beq\label{eq:fista-r-ak}
\begin{aligned}
r \in ]0, 4[ &:  \tk \to \sfrac{4}{4-r} < \pinf ,~~  \ak \to \qfrac{r}{4} < 1   ,  \\
r = 4 &: \tk \approx \sfrac{k+1}{2} \to \pinf ,~~ \ak \to 1  , \\
r \in ]4,\pinf[ &:   \tk \propto  \Ppa{\sfrac{\sqrt{r}}{2}}^k \to \pinf  ,~~  \ak \to \sfrac{2}{\sqrt{r} }  < 1 ,
\end{aligned}
\eeq
which implies that $r$ controls the limiting value of $\tk$, hence that of $\ak$. 
In Figure~\ref{fig:tk-pqr}~(a), we show graphically the behavior of $\ak$ under two choices of $r$: $r=4$ and $r=3.6$. 

\begin{figure}[!ht]
	\centering
	\subfloat[Value of $\ak$ under different $r$]{ \includegraphics[width=0.375\linewidth]{./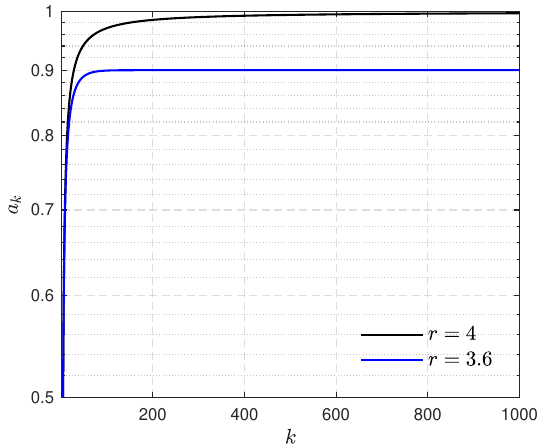} }  \hspace{24pt}
	\subfloat[Value of $\ak$ under different $p, q$]{ \includegraphics[width=0.375\linewidth]{./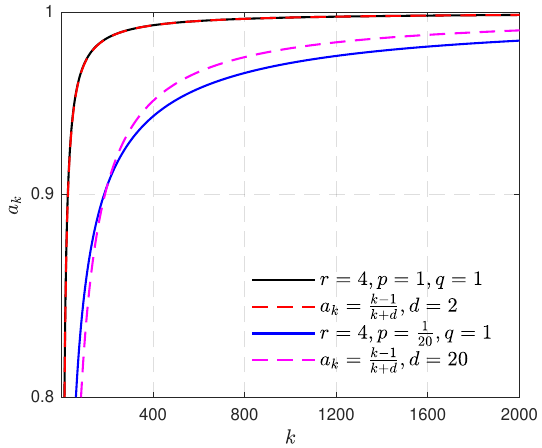} }  \\
	\caption{Different effects of $p, q$ and $r$. (a) $r$ controls the limiting value of $\ak$; (b) $p,q$ control the speed of $\ak$ approaching its limit.}
	\label{fig:tk-pqr}
\end{figure}

\paragraph{Observation II}
Now further replace the two $1$'s in \eqref{eq:fista-r} with $p, q > 0$, and restrict $r \in ]0, 4]$: 
\beq\label{eq:inertial-fista-pqr}
\textstyle \tk = \frac{p+\sqrt{q + r \tkm^2 }}{2} ,~~ \ak = \frac{\tkm - 1}{\tk}  .
\eeq
Depending on the choices of $p,q$ and $r$, this time we have 
\beq\label{eq:fista-pqr-ak}
\begin{aligned}
r \in ]0, 4[ &:  \tk \to \sfrac{2p + \Delta}{4-r} < \pinf  ,~~  \ak \to \sfrac{2p + \Delta - (4-r)}{2p + \Delta} < 1   ,  \\
r = 4 &:   \tk \approx  \sfrac{k+1}{2} p \to \pinf  ,~~  \ak \to 1  ,
\end{aligned}
\eeq
where $\Delta \eqdef \sqrt{rp^2 + (4-r)q}$.

Equation \eqref{eq:fista-pqr-ak} is quite similar to \eqref{eq:fista-r-ak}, in the sense that $\ak$ converges to $1$ for $r=4$ and to some value smaller than $1$ when $r < 4$. 
Moreover, for $r=4$, the growth of $\tk$ is controlled by $p$, indicating that we can control the speed of $\ak$ approaching $1$ via $p$, which is illustrated graphically in Figure~\ref{fig:tk-pqr}~(b). 
Under $r=4$, two different choices of $p,q$ are considered, $(p, q) = (1,1)$ and $(p,q) = (\frac{1}{20}, 1)$. Clearly, $\ak$ approaches~$1$ much slower for the second choice of $p,q$. 
In comparison, we also add a case for \eqref{eq:fista-cd} of FISTA-CD, for which a larger value of $d$ leads to a slower speed of $\ak$ approaching $1$.

\begin{remark}\label{rmk:tk-monotone}
	Let $r<4$, and denote ${t}_{\infty} \eqdef \frac{2p + \Delta}{4-r}, {a}_{\infty} = \frac{2p + \Delta - (4-r)}{2p + \Delta}$ the limiting value of $\tk, \ak$, respectively. Depending on the initial value of $t_{0}$, we have
	$
	\left\{
	\begin{aligned}
	t_0 < {t}_{\infty} &: \tk \nearrow {t}_{\infty},~ \ak \nearrow {a}_{\infty} ; \\
	t_0 = {t}_{\infty} &: \tk \equiv {t}_{\infty},~ \ak \equiv {a}_{\infty} ; \\
	t_0 > {t}_{\infty} &: \tk \searrow {t}_{\infty},~ \ak \searrow {a}_{\infty} .
	\end{aligned}
	\right.
	$
\end{remark}

\begin{center}
\begin{minipage}{0.95\linewidth}
\begin{algorithm}[H]
\caption{A modified FISTA scheme} \label{alg:fista-mod}
{\noindent{\bf{Initial}}}: $p, q > 0$ and $r \in ]0, 4]$, $t_0 = 1$, $\gamma \leq 1/L$ and $x_{0} \in \bbR^n, x_{-1} = x_{0}$. \\ 
\Repeat{convergence}{ \vspace{-1em}
\beq \label{eq:fista-mod}
\begin{gathered}
\textstyle \tk = \frac{p + \sqrt{q + r\tkm^2}}{2}  , ~~~  \ak = \frac{\tkm-1}{\tk}  ,  \\
\yk = \xk + \ak\pa{\xk-\xkm}    ,  \\
\xkp = \prox_{\gamma R}\Pa{\yk - \gamma \nabla F(\yk)} . 
\end{gathered}
\eeq
}
\end{algorithm}
\end{minipage}
\end{center}

\paragraph{A modified FISTA scheme}

Based on the above two observations of $\tk$, we propose a modified FISTA scheme, which we call ``FISTA-Mod'' for short and describe below in Algorithm~\ref{alg:fista-mod}.

\begin{remark}
When $r$ is strictly smaller than $4$, Algorithm \ref{alg:fista-mod} is simply a variant of the inertial Forward--Backward, and we refer to \cite{liang2017activity} for more details on its convergence properties. 
\end{remark}

\subsection{Convergence properties of FISTA-Mod}
\label{sec:global-convergence}

The parameters $p,q$ and $r$ in FISTA-Mod allow us to control the behavior of $\tk$ and $\ak$, hence providing possibilities to prove the convergence of the iterates $\seq{\xk}$. 
Below we provide two convergence results for Algorithm~\ref{alg:fista-mod}: $o(1/k^2)$ convergence rate for $\Phi(\xk) - \min_{x\in\calH}\Phi(x)$ and convergence of $\seq{\xk}$ together with $o(1/k)$ rate for $\norm{\xk-\xkm}$. The proofs of these results are inspired by the work of~\cite{chambolle2015convergence,AttouchFISTA15}, and for the sake of self-consistency we present the details of the proofs.

\subsubsection{Main result}

We present below first the main result, and then provide the corresponding proofs. Let $\xsol \in \Argmin(\Phi)$ be a global minimizer of the problem. 

\begin{theorem}[Convergence of objective]\label{thm:rate-obj}
	For the FISTA-Mod scheme \eqref{eq:fista-mod}, let $r = 4$ and choose $p \in ]0, 1], q > 0$ such that
	\beq\label{eq:q-ineq}
	q \leq (2-p)^2 ,	
	\eeq
	then it holds
	\beq\label{eq:big-O-obj}
	\textstyle  \Phi(\xk) - \Phi(\xsol)
	\leq   \sfrac{2L}{p^2(k+1)^2} \norm{x_{0} - \xsol}^2  .
	\eeq
	Moreover, if $p \in ]0, 1[$ and $q \in [p^2, (2-p)^2]$, then $\Phi(\xk) - \Phi(\xsol) =  o(1/k^2)$. 
\end{theorem}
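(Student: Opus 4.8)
The plan is to run the classical Beck--Teboulle energy estimate, the only new ingredient being that $p,q$ enter through a modified recursion for $\tk$; set $v_k\eqdef\Phi(\xk)-\Phi(\xsol)\ge0$ and keep $r=4$ fixed. First I would make Lemma~\ref{lem:ineq-y} applicable: since $\gamma\le1/L$, the descent lemma (Lemma~\ref{lem:descent}) gives $\Phi(x)\le E_\gamma(x,\yk)$ for all $x$, in particular $\Phi(\xkp)=\Phi(e_\gamma(\yk))\le E_\gamma(\xkp,\yk)$, so Lemma~\ref{lem:ineq-y} applies with $y=\yk$ and $e_\gamma(\yk)=\xkp$. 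Applying it once at $x=\xk$ and once at $x=\xsol$, multiplying the first inequality by $\tk-1$, adding the second, and multiplying through by $\tk$, the cross terms regroup via $\norm{a}^2+2\iprod{a}{b}=\norm{a+b}^2-\norm{b}^2$ (with $a=\tk(\xkp-\yk)$ and $b=\tk\yk-(\tk-1)\xk-\xsol$) into
\[
2\gamma\tk^2 v_{k+1}+\norm{\tk\xkp-(\tk-1)\xk-\xsol}^2\ \le\ 2\gamma(\tk^2-\tk)v_k+\norm{\tk\yk-(\tk-1)\xk-\xsol}^2 .
\]
Since $\ak=\sfrac{\tkm-1}{\tk}$, a one-line computation gives $\tk\yk-(\tk-1)\xk=\tkm\xk-(\tkm-1)\xkm$, so writing $u_k\eqdef\tk\xkp-(\tk-1)\xk-\xsol$ the last norm above is exactly $\norm{u_{k-1}}^2$.

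The next step is the arithmetic of the recursion. Squaring $2\tk-p=\sqrt{q+4\tkm^2}$ gives $\tkm^2=\tk^2-p\tk+\sfrac{p^2-q}{4}$, and $\sqrt{q+4\tkm^2}\ge2\tkm$ forces $\tk-\tkm\ge\sfrac{p}{2}$, so $\seq{\tk}$ is increasing with $\tk\ge1+\sfrac{kp}{2}$; when $p\le1$ this yields $\tkm\ge\sfrac{(k+1)p}{2}$. With these facts, $\tk^2-\tk\le\tkm^2$ is equivalent to $(1-p)\tk\ge\sfrac{q-p^2}{4}$, and this is exactly where \eqref{eq:q-ineq} is used: if $q\le p^2$ the right side is $\le0$, while if $p^2<q\le(2-p)^2$ then $\sfrac{q-p^2}{4}\le1-p\le(1-p)\tk$. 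Hence $s_k\eqdef2\gamma\tkm^2 v_k+\norm{u_{k-1}}^2$ is non-increasing, so $2\gamma\tkm^2 v_k\le s_1$; and applying Lemma~\ref{lem:ineq-y} once more at $y=x_0$ (note $x_1=e_\gamma(x_0)$, $t_0=1$, $x_{-1}=x_0$) bounds $s_1\le\norm{x_0-\xsol}^2$. Taking $\gamma=1/L$ and $\tkm\ge\sfrac{(k+1)p}{2}$ then yields \eqref{eq:big-O-obj}; for $\gamma<1/L$ the same bound holds with $L$ replaced by $1/\gamma$.

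For the $o(1/k^2)$ claim, under $p\in]0,1[$ and $q\in[p^2,(2-p)^2]$ the defect $\delta_k\eqdef\tkm^2-(\tk^2-\tk)=(1-p)\tk-\sfrac{q-p^2}{4}$ is bounded below by $\sfrac{(1-p)p}{2}\,k$ for every $k$ (nonnegativity of the constant term being once more a consequence of \eqref{eq:q-ineq}). The displayed inequality then sharpens to $s_{k+1}+2\gamma\delta_k v_k\le s_k$, so $\sum_k\delta_k v_k<\infty$ and hence $\sum_k k\,v_k<\infty$. I would then combine this summability with the convergence of the non-increasing sequence $s_k$ (which keeps $\tkm^2 v_k$ and $\norm{u_{k-1}}$ bounded) and with the quasi-descent inequality $v_k-v_{k+1}\ge\sfrac{1}{2\gamma}\bigl(\norm{\xkp-\xk}^2-\ak^2\norm{\xk-\xkm}^2\bigr)$ (which follows from Lemma~\ref{lem:ineq-y} at $x=\xk$; see also Lemma~\ref{lem:energy-decreasing}) to upgrade the $O(1/k^2)$ bound to $\tkm^2 v_k\to0$, i.e.\ $\Phi(\xk)-\Phi(\xsol)=o(1/k^2)$, along the lines of Chambolle--Dossal and Attouch--Peypouquet.

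The main obstacle is this last upgrade: passing from $\sum_k k\,v_k<\infty$ to the pointwise statement $k^2 v_k\to0$ genuinely needs the auxiliary (quasi-)monotonicity of $v_k$, whereas everything before it is the standard FISTA energy estimate together with the elementary verification that the modified recursion still satisfies $\tk^2-\tk\le\tkm^2$. A secondary, purely bookkeeping point is that pinning the precise constant $\sfrac{2L}{p^2(k+1)^2}$ rests on the sharp bound $\tkm\ge\sfrac{(k+1)p}{2}$, which is precisely the role of the hypothesis $p\le1$.
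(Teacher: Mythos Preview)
Your argument for the $O(1/k^2)$ bound is essentially identical to the paper's: the same two applications of Lemma~\ref{lem:ineq-y}, the same combination and Pythagoras step leading to \eqref{eq:main-ineq}, the same verification that $\tk^2-\tk\le\tkm^2$ under \eqref{eq:q-ineq} (the paper phrases it via $(1-p)\tk+\tfrac14(p^2-q)\ge(1-p)t_0+\tfrac14(p^2-q)=\tfrac14((2-p)^2-q)\ge0$, you via the equivalent $(1-p)\tk\ge\tfrac14(q-p^2)$), and the same lower bound on $\tk$ (your $\tk\ge1+\tfrac{kp}{2}$ is actually a hair sharper than Lemma~\ref{lem:tk-lower-bound}).

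For the $o(1/k^2)$ claim you take a slightly different route: you read off the defect $\delta_k=\tkm^2-(\tk^2-\tk)$ directly from the \emph{same} telescope, getting $s_{k+1}+2\gamma\delta_k v_k\le s_k$ and hence $\sum_k k\,v_k<\infty$. The paper instead switches tools, invoking Lemma~\ref{lem:energy-decreasing} at the point $x=(1-\tfrac{1}{\tk})\xk+\tfrac{1}{\tk}\xsol$ together with convexity of $\Phi$ to reach the same summability. Your route is more economical (one inequality does both jobs); the paper's route has the advantage of making the auxiliary sequence $\uk=\xk+\tk(\xkp-\xk)$ appear, which is reused in the proof of Theorem~\ref{thm:rate-seq}. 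Both the paper and you leave the final passage from $\sum_k k\,v_k<\infty$ to $k^2v_k\to0$ as a reference to Attouch--Peypouquet; you are right to flag that this step needs the convergence of the Lyapunov sequence $s_k$ (plus that of $\norm{u_{k-1}}$) and not merely the summability.
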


\begin{remark}
	The $O(1/k^2)$ convergence rate \eqref{eq:big-O-obj} recovers the result of FISTA-BT \cite{fista2009} for $p = 1$. Since $p$ appears in the denominator, this suggests that FISTA-BT has the \emph{smallest} constant in the $O(1/k^2)$ rate. 
%
\end{remark}

\begin{theorem}[Convergence of sequence]\label{thm:rate-seq}
	For the FISTA-Mod scheme \eqref{eq:fista-mod}, let $r=4, p \in ]0, 1[$ and $q \in [p^2, (2-p)^2]$, then the sequence $\sequence{\xk}$ generated by FISTA-Mod converges weakly to a global minimizer $\xsol$ of $\Phi$. Moreover, $\norm{\xk - \xkm} =  o(1/k)$. 
\end{theorem}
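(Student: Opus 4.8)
The plan is to follow the now-standard ``Opial + discrete Lyapunov energy'' route used by Chambolle--Dossal and Attouch--Peypouquet, adapted to the $(p,q,r=4)$ parametrisation. Recall Opial's lemma: a sequence $\seq{\xk}$ in $\calH$ converges weakly to a point in a nonempty set $S$ provided (i) for every $\xsol\in S$ the limit $\lim_k\norm{\xk-\xsol}$ exists, and (ii) every weak sequential cluster point of $\seq{\xk}$ lies in $S$. Here $S=\Argmin(\Phi)$. So the proof splits into three tasks: build a suitable energy functional and extract summability/boundedness information from it; use that to verify (i); and use the $o(1/k^2)$ rate from Theorem~\ref{thm:rate-obj} together with lower semicontinuity of $\Phi$ to verify (ii).

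First I would introduce, for a fixed $\xsol\in\Argmin(\Phi)$, the discrete energy
\[
\calE_k \eqdef \tfrac{2}{p^2}\tkm^2\bigl(\Phi(\xk)-\Phi(\xsol)\bigr)/L \;+\; \tfrac1{2}\norm{z_k - \xsol}^2,
\qquad z_k \eqdef \xkm + \tkm(\xk-\xkm),
\]
(the natural ``$z$-variable'' attached to FISTA-Mod; constants to be fixed during the computation). Using Lemma~\ref{lem:ineq-y} with $x=\xk$ and $x=\xsol$ at the point $y=\yk$ — exactly as in the classical FISTA estimate, but keeping track of the identity $\tk^2 - \tk = r\tkm^2/4 = \tkm^2$ that defines the update — one shows that $\calE_{k+1}\le \calE_k - \delta_k$, where $\delta_k\ge 0$ collects the ``leftover'' terms. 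The point of the restriction $q\in[p^2,(2-p)^2]$ (versus merely $q\le(2-p)^2$ needed for the $O(1/k^2)$ bound) is precisely that it forces $\tk^2-\tk-\tkm^2$ to have a favourable sign, so that $\delta_k$ dominates a positive multiple of $\tkm\norm{\xk-\xkm}^2$ and of $\tkm^2\bigl(\Phi(\xk)-\Phi(\xsol)\bigr)$ — this is where the slightly stronger lower bound $q\ge p^2$ is consumed, and it mirrors the ``$d>2$'' strict inequality in FISTA-CD. Telescoping $\calE_{k+1}\le\calE_k-\delta_k$ then yields: $\seq{\calE_k}$ is bounded and convergent, $\sum_k \tkm\norm{\xk-\xkm}^2<\infty$, and $\sum_k \tkm^2(\Phi(\xk)-\Phi(\xsol))<\infty$.

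Next, verifying Opial~(i): from boundedness of $\calE_k$ we get $\seq{z_k}$ bounded; since $z_k-\xk = \tkm(\xk-\xkm)-(\xk-\xkm)=(\tkm-1)(\xk-\xkm)$ and $\tkm\sim (k+1)p/2$, the summability $\sum_k\tkm\norm{\xk-\xkm}^2<\infty$ gives $\norm{z_k-\xk}\to 0$, hence $\seq{\xk}$ is bounded as well. To upgrade to existence of $\lim_k\norm{\xk-\xsol}$ I would run the standard argument of Chambolle--Dossal: define $h_k\eqdef\tfrac12\norm{\xk-\xsol}^2$ and derive from the energy inequality a recursion of the form $h_{k+1}-h_k \le \ak(h_k-h_{k-1}) + \varepsilon_k$ with $\sum_k\varepsilon_k<\infty$ and $\sum_k(h_k-h_{k-1})_+<\infty$ after the usual manipulation (the key algebraic fact being that $\ak<1$ with $\sum_k(1-\ak)=+\infty$ is \emph{not} what we use — instead we use Lemma~\ref{lem:energy-decreasing} or the three-point identity to control the cross terms, exactly as in \cite{chambolle2015convergence,AttouchFISTA15}); this forces $\seq{h_k}$ to converge. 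Finally, Opial~(ii): if $\xk\rightharpoonup \bar x$ along a subsequence, then $\yk\rightharpoonup\bar x$ along it as well (since $\norm{\yk-\xk}=\ak\norm{\xk-\xkm}\to0$), and $\Phi(\xk)\to\Phi(\xsol)$ by Theorem~\ref{thm:rate-obj}; weak lower semicontinuity of $\Phi$ gives $\Phi(\bar x)\le\liminf\Phi(\xk)=\Phi(\xsol)$, so $\bar x\in\Argmin(\Phi)$. Opial then delivers weak convergence of $\seq{\xk}$ to some $\xsol\in\Argmin(\Phi)$.

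For the rate $\norm{\xk-\xkm}=o(1/k)$: from $\sum_k\tkm\norm{\xk-\xkm}^2<\infty$ and $\tkm\sim (k+1)p/2$ we get $\sum_k k\norm{\xk-\xkm}^2<\infty$, hence in particular $k^2\norm{\xk-\xkm}^2\to 0$ along a subsequence; to promote this to a genuine limit (i.e. $o(1/k)$, not merely a subsequential statement) I would show the sequence $k\mapsto k\norm{\xk-\xkm}$ is, up to a summable perturbation, eventually monotone — using the triangle inequality $\norm{x_{k+1}-\xk}\le \ak\norm{\xk-\xkm}+\norm{\xkp-\yk}$ together with a bound on $\norm{\xkp-\yk}$ coming from the proximal step and $\Phi(\xk)\to\Phi(\xsol)$ — and then invoke the elementary lemma that a nonnegative sequence $\seq{u_k}$ with $\sum u_k^2/k' <\infty$ (appropriate weighting) and controlled increments satisfies $u_k=o(1/\sqrt{\text{weight}})$. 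This is the standard $o(1/k)$ bookkeeping and I expect it to be routine once the energy inequality is in hand.

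The main obstacle is the first step: getting the energy inequality $\calE_{k+1}\le\calE_k-\delta_k$ with the \emph{right} sign and with $\delta_k$ strong enough, because the defining relation for $\tk$ in FISTA-Mod is $\tk^2-p\tk = \tfrac{r}{4}\tkm^2 = \tkm^2$ rather than the clean $\tk^2-\tk=\tkm^2$ of FISTA-BT; the extra factor $p$ propagates into every cross term, and it is exactly the interval $q\in[p^2,(2-p)^2]$ that must be used to keep $\delta_k\ge c\,\tkm\norm{\xk-\xkm}^2$ for some $c>0$. Once that inequality is established, the Opial argument and the $o(1/k)$ rate follow the templates in \cite{chambolle2015convergence,AttouchFISTA15} essentially verbatim.
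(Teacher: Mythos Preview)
Your outline is the paper's route (Chambolle--Dossal adapted to the $(p,q,4)$ parametrisation, then Opial), but two places need correcting. First, the leftover $\delta_k$ from the energy $\calE_k$ you wrote down dominates only $(1-p)\tk\bigl(\Phi(\xk)-\Phi(\xsol)\bigr)$ --- linear in $\tk$, not quadratic, cf.\ Lemma~\ref{lem:tk-diff-bound} --- and it does \emph{not} contain a $\tkm\norm{\xk-\xkm}^2$ term at all. The paper obtains $\sum_k k\norm{\xk-\xkm}^2<\infty$ and boundedness of $k^2\norm{\xk-\xkm}^2$ through a separate descent step: Lemma~\ref{lem:energy-decreasing} with $x=\xk$ gives $\Delta_{k+1}-\ak^2\Delta_k\le\gamma(w_k-w_{k+1})$, and one then multiplies by $(k+c)^2$ and telescopes using the already-established $\sum_k k w_k<\infty$. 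Second, the recursion $h_{k+1}-h_k\le\ak(h_k-h_{k-1})+2\ak\Delta_k$ with $\ak\to1$ does not by itself force $\sum_k[h_k-h_{k-1}]_+<\infty$: the paper needs an explicit upper bound $\ak\le 1-b/(k+c)$ (Lemma~\ref{lem:tk-upper-bound}) and then a product-of-$\ak$ estimate $\sum_{k\ge j}\prod_{\ell=j}^k a_\ell\le j+O(1)$ (Lemma~\ref{lem:sum-betajk}), which combined with $\sum_j j\Delta_j<\infty$ gives the summability --- this is the technical heart of the argument, not quite ``the usual manipulation''. On the other hand, your Opial~(ii) step via weak lower semicontinuity of $\Phi$ is cleaner than the paper's route through the sequential weak--strong closedness of the graph of $\partial R$.
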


\subsubsection{Proofs of Theorem \ref{thm:rate-obj}}

Before presenting the proof of Theorem \ref{thm:rate-obj}, we recall the key points for establishing $O(1/k^2)$ convergence for FISTA-BT \cite{fista2009} and $o(1/k^2)$ convergence rate \cite{chambolle2015convergence,AttouchFISTA15}. 
In particular: 
\begin{itemize}
	\item $\tk$ grows to $+\infty$ at a proper speed, \eg $\tk \approx \frac{k+1}{2}$ as pointed out in \cite{fista2009};
	\item The sequence $\seq{\tk}$ satisfies $\tk^2 - \tk \leq \tkm^2$. 
	For example, for $\tk = \frac{1+\sqrt{1+4\tkm^2}}{2}$, one has $\tk^2 - \tk = \tkm^2$.
\end{itemize}
To further improve the $O(1/k^2)$ convergence rate to $o(1/k^2)$, the key is that the difference $\tkm^2 - (\tk^2-\tk)$ should also grow to $+\infty$ \cite{chambolle2015convergence,AttouchFISTA15}. For instance, for the FISTA-CD update rule \eqref{eq:fista-cd}, one has
\beqn
\tkm^2 - (\tk^2-\tk) = \sfrac{1}{d^2}\Pa{ (d-2)k + d^2 - 3d + 3 }  ,
\eeqn
which goes to $+\infty$ as long as $d > 2$ \cite[Eq. (13)]{chambolle2015convergence}. 
It is worth noting that $\tkm^2 - (\tk^2-\tk) \to +\infty$ is also the key for proving the convergence of the iterates $\seq{\xk}$.

We start with the following supporting lemmas. Recall in \eqref{eq:fista-pqr-ak} that $\tk \approx \frac{k+1}{2} p$, we show in the lemma below that $\frac{k+1}{2} p$ is actually a lower bound of $\tk$.

\begin{lemma}[Lower bound of $\tk$]\label{lem:tk-lower-bound}
	For the $\tk$ update rule \eqref{eq:inertial-fista-pqr}, set $r = 4$ and $p \in ]0,1 ], q > 0$. Let $t_0 = 1$, then for all $k \in \bbN$, it holds that
	\beq\label{eq:tk-bnd-below}
	\tk \geq \sfrac{(k+1)p}{2}  . 
	\eeq
\end{lemma}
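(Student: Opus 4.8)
The statement is a lower bound on the sequence $\seq{\tk}$ defined by the recursion \eqref{eq:inertial-fista-pqr} with $r=4$, and the natural route is a straightforward induction on $k$. The only preliminary observation needed is that $\tk > 0$ for every $k$: this is immediate from $t_0 = 1 > 0$ together with the fact that $p > 0$ and $\sqrt{q + 4\tkm^2} \geq 0$, so that $\tk = \frac{p + \sqrt{q+4\tkm^2}}{2} > 0$ whenever $\tkm$ is real; hence the square roots are always taken of nonnegative quantities and $\sqrt{4\tkm^2} = 2\tkm$.

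For the base case $k=0$, we must check $t_0 \geq \frac{p}{2}$, i.e. $1 \geq \frac{p}{2}$, which holds because $p \in ]0,1]$ gives $\frac{p}{2} \leq \frac{1}{2} \leq 1$.

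For the inductive step, assume $\tkm \geq \frac{kp}{2}$ for some $k \geq 1$. Using $q > 0$ and then the induction hypothesis,
\[
\tk = \frac{p + \sqrt{q + 4\tkm^2}}{2} \;\geq\; \frac{p + \sqrt{4\tkm^2}}{2} \;=\; \frac{p + 2\tkm}{2} \;\geq\; \frac{p + 2\cdot\frac{kp}{2}}{2} \;=\; \frac{(k+1)p}{2},
\]
which is exactly \eqref{eq:tk-bnd-below} for index $k$. This closes the induction and proves the lemma.

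\textbf{Main obstacle.} There is essentially none: the argument is a two-line induction. The only point requiring a moment's care is ensuring that $\tkm$ is positive (so that $\sqrt{4\tkm^2}=2\tkm$ rather than $2|\tkm|$ with a sign ambiguity) and that we have correctly used $p \leq 1$ — and only $p \leq 1$ — in the base case; the hypothesis $q > 0$ is used only to discard a nonnegative term and plays no essential role in this particular bound.
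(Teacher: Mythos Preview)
Your proof is correct and follows essentially the same route as the paper: induction on $k$, with the inductive step resting on $\sqrt{q+4\tkm^2}\geq 2\tkm$ (valid because $q>0$ and $\tkm>0$). The paper phrases the inductive step as $\tkp-\tfrac{p}{2}>\tk$ and then invokes the hypothesis, whereas you chain the inequalities in one line; the content is identical.
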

\begin{remark}
	When $p = 1$, we have $\tk \geq \frac{k+1}{2}$ which recovers \cite[Lemma~4.3]{fista2009}. 
\end{remark}
\begin{proof}
	Since $ p \in ]0, 1]$, it is obvious that $t_0 = 1 \geq \frac{p}{2}$ 
	and $t_1 = \frac{p + \sqrt{q+4}}{2} \geq \frac{p+2}{2} \geq p$. 
	Now suppose \eqref{eq:tk-bnd-below} holds for a given $k \in \bbN$, \ie $\tk \geq \frac{(k+1)p}{2}$. 
	Then for $k+1$, we have
	$
	\tkp - \frac{p}{2}
	= \frac{p+\sqrt{q + 4 \tk^2 }}{2} - \frac{p}{2}
	> \frac{p+2 \tk }{2} - \frac{p}{2}
	= \tk
	$ 
	which concludes the proof.
\end{proof}

\begin{lemma}[Lower bound of $\tkm^2-(\tk^2-\tk)$]\label{lem:tk-diff-bound}
	For the $\tk$ update rule \eqref{eq:inertial-fista-pqr}, let $r = 4$ and $p \in [0, 1], p^2-q \leq 0$. Then there holds
	\beq\label{eq:tk-diff-bnd-above}
	\sfrac{p(1-p)(k+1)}{2} \leq \tkm^2 - \pa{\tk^2 - \tk} .
	\eeq
\end{lemma}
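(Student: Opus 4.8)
The plan is to eliminate $t_{k-1}$ from the recursion, reduce the claim to a one-variable estimate on $t_k$, and then feed in the lower bound already available from Lemma~\ref{lem:tk-lower-bound}. Since $r=4$, the update \eqref{eq:inertial-fista-pqr} reads $2t_k-p=\sqrt{q+4t_{k-1}^2}$; the left-hand side is nonnegative because $t_k=\tfrac{p+\sqrt{q+4t_{k-1}^2}}{2}\ge\tfrac p2$, so squaring is legitimate and yields $4t_k^2-4pt_k+p^2=q+4t_{k-1}^2$, that is,
\[
t_{k-1}^2 \;=\; t_k^2 - p\,t_k + \tfrac{p^2-q}{4}.
\]
Substituting this into the quantity of interest produces the clean identity
\[
t_{k-1}^2 - \bigl(t_k^2 - t_k\bigr) \;=\; (1-p)\,t_k + \tfrac{p^2-q}{4},
\]
so the lemma is equivalent to the scalar inequality $(1-p)t_k+\tfrac{p^2-q}{4}\ge \tfrac{p(1-p)(k+1)}{2}$.

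For the linearly growing part I would invoke Lemma~\ref{lem:tk-lower-bound}: under $r=4$, $p\in\,]0,1]$, $q>0$ and $t_0=1$ it gives $t_k\ge\tfrac{(k+1)p}{2}$, and since $1-p\ge 0$ this already contributes $(1-p)t_k\ge\tfrac{p(1-p)(k+1)}{2}$. What remains is to dominate the residual constant $\tfrac{p^2-q}{4}$, which is $\le 0$ under the hypothesis $p^2\le q$. The extra room comes from the initialization: the map $k\mapsto t_k-\tfrac{(k+1)p}{2}$ is nondecreasing, because $t_{k+1}-t_k=\tfrac p2+\tfrac12\bigl(\sqrt{q+4t_k^2}-2t_k\bigr)\ge\tfrac p2$ for $q>0$, so in fact $t_k\ge\tfrac{(k+1)p}{2}+\bigl(t_1-p\bigr)$ with $t_1=\tfrac{p+\sqrt{q+4}}{2}$ for every $k\ge1$. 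It then suffices to verify $(1-p)\bigl(t_1-p\bigr)+\tfrac{p^2-q}{4}\ge 0$, equivalently $2(1-p)\bigl(\sqrt{q+4}-p\bigr)\ge q-p^2$, which is an elementary inequality in $p,q$ that one settles by squaring under the standing parameter restriction (e.g.\ $q\le(2-p)^2$); the first index or two can be inspected by hand.

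The step I expect to be the genuine obstacle is exactly this last accounting. Since Lemma~\ref{lem:tk-lower-bound} is entirely spent on matching the target $\tfrac{p(1-p)(k+1)}{2}$, the whole burden of covering the negative residual $\tfrac{p^2-q}{4}$ falls on the finite head start $t_1-p$ inherited from $t_0=1$, so one cannot close the gap with a throwaway bound and must exploit the relation between $p$ and $q$ carefully. Everything else---the algebraic identity, the monotonicity of $t_k-\tfrac{(k+1)p}{2}$, and the reduction to a scalar inequality---is routine.
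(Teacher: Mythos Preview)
Your identity $t_{k-1}^2-(t_k^2-t_k)=(1-p)\,t_k+\tfrac{p^2-q}{4}$ is exactly the paper's starting point. From there the paper is one line: under the hypothesis $p^2\le q$ it asserts $t_k^2-pt_k\le t_{k-1}^2$, i.e.\ it simply drops the constant $\tfrac{p^2-q}{4}$ to obtain $(1-p)\,t_k\le t_{k-1}^2-(t_k^2-t_k)$, and then invokes Lemma~\ref{lem:tk-lower-bound}. So the ``extra room from the initialization'' programme you outline is not what the paper does; the paper treats the residual as having the favourable sign and is done.

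You are right to worry about that sign, and your proposed repair has a genuine gap. The scalar inequality you reduce to, $2(1-p)\bigl(\sqrt{q+4}-p\bigr)\ge q-p^2$, fails inside the parameter range you invoke: take $p=0.99$, $q=(2-p)^2=1.0201$, where the left side is about $0.025$ and the right side is $0.04$. In fact the stated bound \eqref{eq:tk-diff-bnd-above} itself fails at those parameters for $k=1$ (with $t_0=1$ one computes $t_0^2-(t_1^2-t_1)\approx 0.0062<0.0099=p(1-p)$), so no amount of bookkeeping on the initialization can rescue the inequality as written. The paper's one-line argument is correct verbatim if the hypothesis is read as $q\le p^2$; under the stated hypothesis $p^2\le q$, what survives---and is all the downstream arguments actually need---is the slightly weaker estimate
\[
t_{k-1}^2-(t_k^2-t_k)\;\ge\;\tfrac{p(1-p)(k+1)}{2}-\tfrac{q-p^2}{4},
\]
which follows directly from your identity together with Lemma~\ref{lem:tk-lower-bound} and still diverges linearly for $p<1$.
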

\begin{remark}
	The inequality \eqref{eq:tk-diff-bnd-above} implies that, if we choose $p < 1$, then $\tkm^2 - \pa{\tk^2 - \tk} \to \pinf$.
\end{remark}
\begin{proof}
	For \eqref{eq:inertial-fista-pqr}, when $r = 4$, we have $\tk = \frac{p+\sqrt{q + 4 \tkm^2 }}{2}
	~\Leftrightarrow~ \tk^2 - p\tk + \frac{1}{4}\pa{p^2-q} = \tkm^2$. 
	Since $p^2 \leq q$, then
	\beq\label{eq:tkm-tk2-tk}
	\begin{aligned}
		\tk^2 - p \tk + \sfrac{1}{4}\pa{p^2-q} = \tkm^2 
		~~\Longrightarrow~~~~~&
		\tk^2 - p \tk \leq \tkm^2   \\
		~~\Longleftrightarrow~~~~~&
		\tk^2 - \tk + (1-p)\tk \leq \tkm^2   \\
		~~\Longrightarrow~~~~~&
		(1-p)\tk \leq \tkm^2 - \pa{\tk^2 - \tk}  \\
		{\scriptsize\textrm{(Lemma \ref{lem:tk-lower-bound})}}~\Longrightarrow~~~~~&
		\sfrac{p(1-p)(k+1)}{2}  \leq (1-p)\tk \leq \tkm^2 - \pa{\tk^2 - \tk}   ,
	\end{aligned}
	\eeq
	which concludes the proof.
\end{proof}

\begin{remark}
	The first line of \eqref{eq:tkm-tk2-tk} implies that $\tk^2 - \tkm^2 \leq p \tk$. 
	Recently it is shown in \cite{attouch2018inertial} that $p<1$ is the key for proving the convergence of the iterates $\seq{\xk}$, see \cite[Theorem 2.1]{attouch2018inertial}.
\end{remark}

The proof below is a combination of the result of \cite{fista2009,chambolle2015convergence}.

\begin{proof}[Proofs of Theorem \ref{thm:rate-obj}]

For \eqref{eq:inertial-fista-pqr}, when $r = 4$, $\tk$ is {monotonically increasing} as $\tk - \tkm \geq \frac{p}{2} >~0$. Moreover, 
\[
\begin{aligned}
\tk^2 - p \tk + \sfrac{1}{4}\pa{p^2-q} =  \tkm^2
~~\Longleftrightarrow~~&
\tk^2 - \tk + (1-p)\tk + \sfrac{1}{4}\pa{p^2-q} =  \tkm^2  \\
~~\Longrightarrow~~&
\tk^2 - \tk + (1-p)t_{0} + \sfrac{1}{4}\pa{p^2-q} \leq  \tkm^2  \\
{\scriptsize \textrm{($t_0 = 1$)}} ~\Longleftrightarrow~~&
\tk^2 - \tk + \sfrac{1}{4}\pa{ (2-p)^2 - q } \leq  \tkm^2  \\
{\scriptsize \textrm{(owing to \eqref{eq:q-ineq})}} ~\Longrightarrow~~&
\tk^2 - \tk \leq  \tkm^2  .
\end{aligned}
\]
Define $\vk = \Phi(\xk) - \Phi(\xsol)$. 
Applying Lemma~\ref{lem:ineq-y} at the points ($x = \xk, y = \yk$) and at ($x = \xsol, y = \yk$) leads to
\[
\begin{aligned}
\sfrac{2}{L} \pa{\vk - \vkp} 
&\geq \norm{\xkp-\yk}^2 + 2\iprod{\xkp-\yk}{\yk - \xk}  \\
-\sfrac{2}{L} \vkp 
&\geq \norm{\xkp-\yk}^2 + 2\iprod{\xkp-\yk}{\yk - \xsol}  ,
\end{aligned}
\]
where $\xkp = e_{\gamma}(\yk)$ is used. 
Multiplying $\tk-1$ to the first inequality and then adding to the second one yield,
\[
\sfrac{2}{L} \Pa{(\tk-1)\vk - \tk\vkp} 
\geq \tk\norm{\xkp-\yk}^2+ 2\iprod{\xkp-\yk}{\tk\yk - (\tk-1)\xk - \xsol}  .
\]
Multiply $\tk$ to both sides of the above inequality and use the result $\tk^2 - \tk \leq  \tkm^2$, we get
\[
\sfrac{2}{L} \Pa{\tkm^2\vk - \tk^2\vkp} 
\geq \tk^2\norm{\xkp-\yk}^2+ 2\tk\iprod{\xkp-\yk}{\tk\yk - (\tk-1)\xk - \xsol}  .
\]
Apply the Pythagoras relation $2\iprod{b-a}{a-c} = \norm{b-c}^2 - \norm{a-b}^2 - \norm{a-c}^2$ 
to the last inner product of the above inequality we get
\beq\label{eq:main-ineq}
\begin{aligned}
	\sfrac{2}{L} \Pa{\tkm^2\vk - \tk^2\vkp} 
	&\geq  \norm{\tk\xkp - (\tk-1)\xk - \xsol}^2 - \norm{\tk\yk - (\tk-1)\xk - \xsol}^2   \\
	&=  \norm{\tk\xkp - (\tk-1)\xk - \xsol}^2 - \norm{\tkm\xk - (\tkm-1)\xkm - \xsol}^2   .
\end{aligned}
\eeq
If $\ak - \akp \geq \bkp - \bk$ and $a_1 + b_1 < c$, then $a_k < c$ for all $k \geq 1$~\cite[Lemma~4.2]{fista2009}. 
Hence, \eqref{eq:main-ineq} yields, 
\[
\sfrac{2}{L} \tk^2\vk
\leq  \norm{x_{0} - \xsol}  .
\]
Apply Lemma \ref{lem:tk-lower-bound}, we get
\[
\Phi(\xk) - \Phi(\xsol)
\leq \sfrac{2L}{p^2(k+1)^2} \norm{x_{0} - \xsol}^2  ,
\]
which concludes the proof for the first claim \eqref{eq:big-O-obj}.


Let $\uk = \xk + \tk (\xkp-\xk)$. Applying Lemma~\ref{lem:energy-decreasing} with $y = \yk, y^+ = \xkp$ and $x = (1-\frac{1}{\tk})\xk + \frac{1}{\tk}\xsol$ yields
\[
\Phi(\xkp) + \sfrac{1}{2\gamma} \norm{\tfrac{1}{\tk}\uk - \tfrac{1}{\tk}\xsol}^2
\leq \Phi\Pa{(1-\tfrac{1}{\tk})\xk + \tfrac{1}{\tk}\xsol} + \sfrac{1}{2\gamma} \norm{\tfrac{1}{\tk}\ukm - \tfrac{1}{\tk}\xsol}^2		.
\]
Applying the convexity of $\Phi$, we further get
\[
\Pa{ \Phi(\xkp) - \Phi(\xsol) } - (1-\tfrac{1}{\tk}) \Pa{ \Phi(\xk) - \Phi(\xsol) }
\leq \sfrac{1}{2\gamma\tk^2} \Pa{ \norm{\ukm - \xsol}^2 - \norm{\uk - \xsol}^2 }		.
\]
Multiply $\tk^2$ to both sides of the above inequality, 
\[
\tk^2\Pa{ \Phi(\xkp) - \Phi(\xsol) } - (\tk^2 - \tk) \Pa{ \Phi(\xk) - \Phi(\xsol) }
\leq \sfrac{1}{2\gamma} \Pa{ \norm{\ukm - \xsol}^2 - \norm{\uk - \xsol}^2 }		.
\]
From Lemma \ref{lem:tk-diff-bound}, we have $\frac{p(1-p)(k+1)}{2} - \tkm^2 \leq  - \pa{\tk^2 - \tk}$, hence
\[
\begin{aligned}
\tk^2\Pa{ \Phi(\xkp) - \Phi(\xsol) } - \tkm^2 \Pa{ \Phi(\xk) - \Phi(\xsol) } + \sfrac{p(1-p)(k+1)}{2} \Pa{ \Phi(\xk) - \Phi(\xsol) }		
\leq \sfrac{1}{2\gamma} \Pa{ \norm{\ukm - \xsol}^2 - \norm{\uk - \xsol}^2 }		.
\end{aligned}
\]
Summing the inequality from $k=1$ to $K$, we get
\[
\begin{aligned}
&t_{K}^2\Pa{ \Phi(x_{K+1}) - \Phi(\xsol) }  + \sfrac{p(1-p)}{2} \msum_{j=1}^{K} j \Pa{ \Phi(x_{j}) - \Phi(\xsol) }		
\leq \sfrac{1}{2\gamma} \Pa{ \norm{u_{0} - \xsol}^2 - \norm{u_{K} - \xsol}^2 }		,
\end{aligned}
\]
which means that $\sum_{j=1}^{\pinf} j \Pa{ \Phi(x_{j}) - \Phi(\xsol) } < \pinf$, that is  $\Phi(\xk) - \Phi(\xsol) = o(1/k^2)$.
\end{proof}

\subsubsection{Proofs of Theorem \ref{thm:rate-seq}}

The proof of Theorem \ref{thm:rate-seq} is inspired by \cite{chambolle2015convergence}, where the authors showed that the key to prove the convergence of $\seq{\xk}$ is the following summability
\[
\msum_{k\in\bbN} k \norm{\xk-\xkm}^2 < + \infty	.
\]
As previously mentioned, the major difference between FISTA-BT \eqref{eq:fista-bt} and FISTA-CD~\eqref{eq:fista-cd} is that $\tkm^2 - (\tk^2-\tk) \to +\infty$ holds for FISTA-CD. 
For the proposed FISTA-Mod scheme, as $\frac{p(1-p)k}{2} \leq \tkm^2 - (\tk^2-\tk)$ also goes to $+\infty$ as long as $p$ is strictly smaller than $1$, this allows us to adapt the proof of~\cite{chambolle2015convergence} to FISTA-Mod, hence proving the convergence of $\seq{\xk}$.

We need two supporting lemmas before presenting the proof of Theorem~\ref{thm:rate-seq}. 
Given $\ell \in \bbN_+$, define the truncated sum $S_{\ell} \eqdef \frac{q}{4p} \sum_{i=0}^{\ell} \frac{1}{1+i}$
and a new sequence $\bartk$ by
\[
\textstyle  \bartk 
\eqdef 1+S_{\ell} + \Pa{ \sfrac{p}{2} + \sfrac{q}{4p(\ell+1)} } k  .
\]
We have the following lemma showing that $\bartk$ serves an upper bound of $\tk$.

\begin{lemma}[Upper bound of $\tk$]\label{lem:tk-upper-bound}
	For the $\tk$ update rule \eqref{eq:inertial-fista-pqr}, let $r = 4$ and $p, q \in [0,1 ]$. For all $k \in \bbN$, it holds that $\tk \leq \bartk$. 
\end{lemma}
The purpose of bounding $\tk$ from above by a linear function of $k$ is such that we can eventually bound $\ak$ from above, which is needed by the following lemma. 
\begin{proof}
	Given $\tk, \tkp$, we have 
	\[
	\textstyle \tkp - \tk
	= \frac{p + \sqrt{q+4\tk^2}}{2} - \tk 
	= \frac{p}{2} +  \frac{\sqrt{q+4\tk^2} - 2\tk}{2}  
	\leq \qfrac{p}{2} +\frac{\sqrt{ (2\tk+q/(4\tk))^2 } - 2\tk}{2} 
	= \frac{p}{2} + \frac{q}{8\tk}	. 
	\]
	Clearly, $t_0 \leq \bart_{0}$. Suppose $\tk \leq \bartk$ for $\ell \leq k$ and recall that $\tk \geq \frac{k+1}{2} p$, then we have
	\[
	\begin{aligned}
	\tkp 
	\leq \tk + \sfrac{p}{2} + \sfrac{q}{8\tk}
	\leq \bartk + \sfrac{p}{2} + \sfrac{q}{8\tk}
	&= 1+S_{\ell} + \Pa{ \sfrac{p}{2} + \sfrac{q}{4p(\ell+1)} } k + \sfrac{p}{2} + \sfrac{q}{8\tk}	\\
	&\leq 1+S_{\ell} + \Pa{ \sfrac{p}{2} + \sfrac{q}{4p(\ell+1)} } k + \sfrac{p}{2} + \sfrac{q}{4{(k+1)} p}	\\
	&\leq 1+S_{\ell} + \Pa{ \sfrac{p}{2} + \sfrac{q}{4p(\ell+1)} } k + \sfrac{p}{2} + \sfrac{q}{4{(\ell+1)} p} 	
	= \bart_{k+1}	,  
	\end{aligned}
	\]
	and we conclude the proof.
\end{proof}

Denote $\ca{x}$ the smallest integer that is larger than $x$, and define the following two constants
\[
\textstyle b \eqdef \ca{ \sfrac{p+2}{p + {q}/\pa{2p(\ell+1)}} } 
\qandq
c \eqdef \sfrac{p + 2 + 2S_{\ell}}{p + {q}/\pa{2p(\ell+1)}}  .
\]

\begin{lemma}\label{lem:sum-betajk}
	For all $j \geq 1$, define $\beta_{j,k} \eqdef \mprod_{i=j}^{k} a_{i} $ 
	for all $j, k$, and $\beta_{j, k} = 1$ for all $k < j$. 
	Let $\ell \geq \ca{\frac{q}{p(2-p)}}$, then for all $j$, it holds that $\msum_{k=j}^{\infty} \beta_{j, k} 
	\leq j + c + 2b$. 
\end{lemma}
\begin{proof}
	We first show that $\ak$ is bounded from above. From the definition of $\ak$ we have
	\beq\label{eq:ak-up-bnd}
	\begin{aligned}
		\ak
		= \sfrac{\tkm - 1}{ \tk } 
		= \sfrac{2\tkm - 2}{ p + \ssqrt{q + 4\tkm^2} }   
		\leq \sfrac{p+2\tkm - 2-p}{ p + 2\tkm }   
		&= 1 - \sfrac{2+p}{ p + 2\tkm }   \\
		{\hspace{4.3em}} { \textrm{\scriptsize (Lemma~\ref{lem:tk-upper-bound})}} 
		& \leq 1 - \sfrac{2+p}{ p + 2 + 2S_{\ell} + \pa{ p + \frac{q}{2p(\ell+1)} } k }   
		= 1 - \sfrac{b}{ k + c }    .
	\end{aligned}
	\eeq
	From \eqref{eq:ak-up-bnd} we have that
	\[
	\beta_{j,k} 
	= \mprod_{i=j}^{k} a_{i}   
	\leq \mprod_{i=j}^{k}  \sfrac{i+c-b}{ i + c }  .
	\]
	For $k=j,...,j+2b-1$, we have $\beta_{j,k} < 1$. Then for $k-j \geq 2b$,
	\[
	\begin{aligned}
	\beta_{j,k} 
	\leq \mprod_{i=j}^{k}  \sfrac{i + c - b}{ i + c }  
	%
	&=  \sfrac{j + c - b}{ j + c } ~
	\sfrac{j + 1 + c - b}{ j + 1 + c }
	\dotsm
	\sfrac{j + c }{ j + b + c }	~
	\sfrac{j + 1 + c}{ j + b+1 + c }
	\dotsm
	\sfrac{k + c - b}{ k + c } \\
	&=  \sfrac{(j + c - b) \dotsm (j + c - 1)}{ (k+c - b+1) \dotsm (k+c) }  
	\leq \sfrac{(j + c -1 )^{b}}{ (k + c - b + 1)^{b} }   .
	\end{aligned}
	\]
	Therefore, 
	\[
	\begin{aligned}
	\msum_{k=j}^{\infty} \beta_{j, k}
	\leq 2b +  \msum_{k=j+2b}^{\infty} \beta_{j, k}
	&\leq 2b +  (j + c - 1 )^{b} \msum_{k=j+2b}^{\infty} \sfrac{1}{ (k + c -b+1)^{b} }   \\
	&\leq 2b +  (j + c - 1 )^{b} \int_{x=j+2b}^{\infty} \sfrac{1}{ (x+ c-b+1)^{b} } \mathrm{d}x   \\
	&\leq 2b +  (j + c - 1 )^{b} \sfrac{1}{b-1} \sfrac{1}{ (j + b+ c +1)^{b-1} }   \\
	&\leq 2b +  \sfrac{1}{b-1} (j + c - 1 )  
	\leq j + c + 2b  .
	\end{aligned}
	\]
	The last inequality uses the fact that $b \geq 2$ for $\ell \geq \ca{\frac{q}{p(2-p)}}$. 
\end{proof}

\begin{proof}[Proofs of Theorem \ref{thm:rate-seq}]
	
	Applying Lemma~\ref{lem:energy-decreasing} with $y = \yk$ and $x = \xk$, we get
	\[
	\textstyle \Phi(\xkp) + \sfrac{\norm{\xk-\xkp}^2}{2\gamma}
	\leq \Phi(\xk) + \ak^2\sfrac{\norm{\xkm-\xk}^2}{2\gamma}  ,
	\]
	which means, let $\Delta_{k} \eqdef \frac{1}{2} \norm{\xk-\xkm}^2$, that $\Delta_{k+1} - \ak^2\Delta_{k}
	\leq \gamma\pa{\vk - \vkp}$. 
	%
	Denote the upper bound of $\ak$ in \eqref{eq:ak-up-bnd} as $\abark \eqdef 1 - \frac{b}{ k + c } ,~ \forall k \geq 2$, 
	and let $\abar_{1} = 0$ since $a_{1} = 0$. 
	It is then straightforward that
	\[
	\Delta_{k+1} - \abark^2\Delta_{k}
	\leq \Delta_{k+1} - \ak^2\Delta_{k}
	\leq \gamma\pa{\vk - \vkp}  .
	\]
	Multiplying the above inequality with $(k+c)^2$ and summing from $k=1$ to $K$ lead to
	\[
	\msum_{k=1}^{K}(k+c)^2\pa{\Delta_{k+1} - \abark^2\Delta_{k}}
	\leq \gamma \msum_{k=1}^{K} (k+c)^2 \pa{\vk - \vkp}  .
	\]
	Since $\abar_{1} = 0$, we derive from above that
	\[
	\begin{aligned}
	\msum_{k=1}^{K}(k+c)^2\pa{\Delta_{k+1} - \abark^2\Delta_{k}} 	
	&= (K+c)^2\Delta_{K+1} + \msum_{k=2}^{K}\Pa{(k+c-1)^2 - (k+c)^2\abark^2}\Delta_{k}  \\
	&= (K+c)^2\Delta_{K+1} + \msum_{k=2}^{K}\Pa{(k+c-1)^2 - (k + c - b)^2}\Delta_{k}  \\
	&\leq (K+c)^2\Delta_{K+1} + \msum_{k=2}^{K}2(b-1)(k+c)\Delta_{k}  \\
	&\leq \gamma \Pa{ (c+1)^2 w_{1} - (c+K)^2w_{K+1} } +  \gamma \msum_{k=2}^{K} \Pa{ (k+c)^2 - (k+c-1)^2 } \vk   \\
	&\leq \gamma \Pa{ (c+1)^2 w_{1} - (c+K)^2w_{K+1} } +  2\gamma \msum_{k=2}^{K} (k+c) \vk   .
	\end{aligned}
	\]
	From the proof of Theorem~\ref{thm:rate-obj}, we have that $\sum_{k\in\bbN} k \vk < \pinf$, which in turn implies that $\sequence{k\Delta_{k}}$ is \emph{summable} and that sequence $\seq{k^2\Delta_{k}}$ is bounded, which also indicates $\norm{\xk-\xkm} = o(1/k)$.
	
Now define
	$
	\psi_{k} \eqdef \sfrac{1}{2}\norm{\xk - \xsol}^2$ and $
	\phi_{k} \eqdef \sfrac{1}{2}\norm{\yk - \xkp}^2  
	$. 
	By applying the definition of $\yk$, we have
	\beq\label{eq:phi-xk-xkp}
	\begin{aligned}
		\psi_{k} - \psi_{k+1} 
		&= \sfrac12\iprod{\xk-\xsol+\xkp-\xsol}{\xk-\xkp}  \\
		&= \Delta_{k+1} + \iprod{\yak-\xkp}{\xkp-\xsol} - \ak\iprod{\xk-\xkm}{\xkp-\xsol} \\
		&\geq \Delta_{k+1} + \gamma\iprod{\nabla F(\yk) - \nabla F(\xsol)}{\xkp-\xsol}  - \ak\iprod{\xk-\xkm}{\xkp-\xsol}.
	\end{aligned}
	\eeq
	As $\nabla F$ is $\frac{1}{L}$-cocoercive (Definition~\ref{def:coco-opt}), applying Young's inequality yields
	\beq \label{eq:gradF-cocoercive}
	\begin{aligned}
		\iprod{\nabla F(\yk)-\nabla F(\xsol)}{\xkp-\xsol}  
		&\geq {\sfrac{1}{L}}\norm{\nabla F(\yk)-\nabla F(\xsol)}^2 + \iprod{\nabla F(\yk)-\nabla F(\xsol)}{\xkp-\yk}  \\
		&\geq {\sfrac{1}{L}}\norm{\nabla F(\yk)-\nabla F(\xsol)}^2 - {\sfrac{1}{L}}\norm{\nabla F(\yk)-\nabla F(\xsol)}^2 - \sfrac{L}{2}\phi_{k} 
		= -\sfrac{L}{2}\phi_{k}. 
	\end{aligned} 
	\eeq
	Back to \eqref{eq:phi-xk-xkp}, we get
	\beq\label{eq:phi-xk-xkp-1}
	\begin{aligned}
		\psi_{k} - \psi_{k+1} 
		&\geq \Delta_{k+1}  -  \sfrac{\gamma L}{2}\phi_{k}  - \ak\iprod{\xk-\xkm}{\xkp-\xsol}.
	\end{aligned}
	\eeq
	For $\iprod{\xk-\xkm}{\xkp-\xsol}$, we have
	\beq\label{eq:xk-xkm-xkp-xsol}
	\begin{aligned}
		\iprod{\xk-\xkm}{\xkp-\xsol} 
		&= \iprod{\xk-\xkm}{\xkp-\xk}  + \iprod{\xk-\xkm}{\xk-\xsol}  \\
		&= \iprod{\xk-\xkm}{\xkp-\xk} +\pa{\Delta_{k} + \psi_{k} - \psi_{k-1} } ,
	\end{aligned}
	\eeq
	where we applied the usual Pythagoras relation to $\iprod{\xk-\xkm}{\xk-\xsol}$. 
	Putting \eqref{eq:xk-xkm-xkp-xsol} back into \eqref{eq:phi-xk-xkp-1} and rearranging terms yield
	\beq\label{eq:phi-xk-xkp-3}  
	\begin{aligned}
		\psi_{k+1} - \psi_{k} -  \ak\pa{\psi_{k} - \psi_{k-1} }  
		&\leq -\Delta_{k+1} + \sfrac{\gamma L}{2}\phi_{k} + \ak\iprod{\xk-\xkm}{\xkp-\xk} + \ak \Delta_{k}  \\
		&= -\Delta_{k+1} + \sfrac{\gamma L}{2}\phi_{k} + \iprod{\yk-\xk}{\xkp-\xk} + \ak \Delta_{k}   \\
		&= -\Delta_{k+1} + \sfrac{\gamma L}{2}\phi_{k} + \Pa{ \ak^2\Delta_{k} + \Delta_{k+1} - \sfrac{1}{2}\norm{\yk-\xkp}^2 } + \ak \Delta_{k}   \\
		&= \sfrac{\gamma L - 1}{2}\phi_{k}  + (\ak+\ak^2) \Delta_{k} , 
	\end{aligned}
	\eeq
	where the Pythagoras relation is applied again to $\iprod{\yk-\xk}{\xkp-\xk}$. 
	Since $\gamma \in ]0, 1/L]$ and $\ak \leq 1$, we get from above that
	\[
	\psi_{k+1} - \psi_{k} -  \ak\pa{\psi_{k} - \psi_{k-1} }  
	\leq 2\ak \Delta_{k}  .
	\]
	Define $\xi_{k} = \max\ba{0, \psi_{k}-\psi_{k-1}}$, then
	\[
	\xi_{k+1} 
	\leq \ak \pa{\xi_{k} + 2\Delta_{k}} 
	\leq 2 \msum_{j=2}^{k} \bPa{ \mprod_{l=j}^{k} a_{l} } \Delta_{j}
	= 2 \msum_{j=2}^{k} \beta_{j,k} \Delta_{j}  ,
	\]
	Applying Lemma~\ref{lem:sum-betajk} and the summability of $\seq{k\Delta_{k}}$ leads to
	\[
	\msum_{k=2}^{\pinf} \xi_{k} 
	\leq 2 \msum_{k=1}^{\pinf}  \msum_{j=2}^{k} \beta_{j, k} \Delta_{j}  
	= 2 \msum_{j=2}^{k}  \Delta_{j}   \msum_{k=1}^{\pinf}   \beta_{j, k} 
	\leq 2 \msum_{j=2}^{k} \pa{j+c+2b} \Delta_{j}  
	< \pinf   .
	\]
	Then we have
	\[
	\Phi_{k+1} - \msum_{j=1}^{k+1}[\theta_{j}]_{+} \leq \Phi_{k+1} - \theta_{k+1} - \msum_{j=1}^{k}[\theta_{j}]_{+} 
	= \Phi_{k} - \msum_{j=1}^{k}[\theta_{j}]_{+} ,
	\]
	which means $\sequence{ \Phi_{k} - \sum_{j=1}^{k}[\theta_{j}]_{+}  }$ is monotone non-increasing, hence convergent. It is immediate that $\sequence{ \Phi_{k}}$ is also convergent, meaning that $\lim_{k \to +\infty}\norm{\xk - \xsol}$ exists for any $\xsol$ such that $0 \in A(\xsol)+B(\xsol)$.

	Let $\bar{x}$ be a weak cluster point of $\seq{\xk}$, and let us fix a subsequence, say $x_{k_j}\rightharpoonup \bar{x}$. 
	Applying Lemma~\ref{lem:opt-y} with $y = y_{k_j}$, we get
	\beqn
	\textstyle g_{k_j} \eqdef \frac{y_{k_j} - x_{k_j+1}}{\gamma} - \nabla F(y_{k_j}) \in \partial R(x_{k_j+1}) .
	\eeqn
	Since $\nabla F$ is cocoercive and $y_{k_j} = x_{k_j} + a_{k_j}(x_{k_j} - x_{k_j-1}) \rightharpoonup \bar{x}$, we have $\nabla F(y_{k_j}) \to \nabla F(\bar{x})$. In turn, $u_{k_j} \to -\nabla F(\bar{x})$ since $\gamma > 0$. 
	Since $(x_{k_j+1},u_{k_j}) \in \gra (\partial R)$, and the graph of the maximal monotone operator $\partial R$ is sequentially weakly-strongly closed in $\calH \times \calH$, we get that $-\nabla F(\bar{x}) \in \partial R(\bar{x})$, \ie $\bar{x}$ is a solution of \eqref{eq:min-problem}. Opial's Theorem~\cite{opial1967weak} then concludes the proof. 
\end{proof}


\section{Lazy-start strategy}\label{sec:lazy}

From the last section, the benefits of free parameters $p,q, r$ in FISTA-Mod are $o(1/k^2)$ convergence rate in objective function value and convergence of sequence. 
In this section, we further show that the degree of freedom provided by these parameters allows us to design a so-called ``lazy-start strategy'' which can make FISTA-Mod/FISTA-CD much faster in practice.

\begin{proposition}[Lazy-start FISTA]\label{prop:lazy-start}
	For FISTA-Mod and FISTA-CD, consider the following choices of $p,q$ and $d$ respectively:
	\begin{description}[leftmargin=3.25cm]
		\item[FISTA-Mod] $p \in [\frac{1}{80}, \frac{1}{10}], q \in [0, 1]$ and $r = 4$;
		\item[FISTA-CD] $d \in [10, 80]$.
	\end{description}
\end{proposition}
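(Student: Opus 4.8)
The statement is, at heart, a \emph{verification}: each window of parameters listed is a particular instance of hypotheses under which convergence has already been established in Section~\ref{sec:fista-mod} and in \cite{chambolle2015convergence}, so the plan is to check those hypotheses interval by interval and then to record the elementary monotonicity fact that makes the adjective ``lazy'' precise. No new energy estimate is needed.

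For FISTA-Mod, fix $p\in[\sfrac{1}{80},\sfrac{1}{10}]$, $q\in\,]0,1]$ and $r=4$. Since $[\sfrac{1}{80},\sfrac{1}{10}]\subset\,]0,1[$, the requirements $r=4$ and $p\in\,]0,1[$ of Theorems~\ref{thm:rate-obj} and~\ref{thm:rate-seq} hold. For the constraint on $q$ I would use that $p\mapsto(2-p)^2$ is decreasing on $[0,1]$, so $(2-p)^2\ge(2-\sfrac{1}{10})^2=\sfrac{361}{100}>1\ge q$; hence $q\le(2-p)^2$ and the first part of Theorem~\ref{thm:rate-obj} gives $\Phi(\xk)-\Phi(\xsol)\le\sfrac{2L}{p^2(k+1)^2}\norm{x_0-\xsol}^2$ for every such choice. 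Moreover $p\le\sfrac{1}{10}$ gives $p^2\le\sfrac{1}{100}$, so on the practically relevant subrange $q\in[p^2,1]$ one has $q\in[p^2,(2-p)^2]$, and the second part of Theorem~\ref{thm:rate-obj} together with Theorem~\ref{thm:rate-seq} upgrades the conclusion to $\Phi(\xk)-\Phi(\xsol)=o(1/k^2)$, weak convergence of $\seq{\xk}$ to a minimiser of $\Phi$, and $\norm{\xk-\xkm}=o(1/k)$. For FISTA-CD with $d\in[10,80]$ one has $d>2$, hence by~\eqref{eq:diff-to-infty-fista-cd} $\tkm^2-(\tk^2-\tk)=\sfrac{1}{d^2}\big((d-2)k+d^2-3d+3\big)\to+\infty$, which is exactly the ingredient used in \cite{chambolle2015convergence,AttouchFISTA15} for the $o(1/k^2)$ objective rate and the convergence of the iterates; so every listed choice is covered by an already proven theorem.

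To justify the name ``lazy'' I would make the behaviour of $\ak$ quantitative, using the two-sided control of $\tk$ already in hand: Lemma~\ref{lem:tk-lower-bound} gives $\tk\ge\sfrac{(k+1)p}{2}$ and Lemma~\ref{lem:tk-upper-bound} gives a matching linear upper bound $\tk\le\bartk$, so for $r=4$, $\tk=\sfrac{p(k+1)}{2}+O(\log k)$ and hence $1-\ak=\sfrac{\tk-\tkm+1}{\tk}=\sfrac{1+2/p+o(1)}{k}$; likewise $1-\ak=\sfrac{d+1}{k+d}$ for FISTA-CD. Thus taking $p$ small, respectively $d$ large, enlarges the constant multiplying $1/k$ in $1-\ak$, i.e.\ $\ak$ climbs to $1$ strictly more slowly than for FISTA-BT (where that constant equals $3$), which is precisely the lazy-start effect. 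The only point that is not a routine calculation is that the concrete endpoints $\sfrac{1}{80},\sfrac{1}{10},10,80$ are not forced by any inequality above — any $p\in\,]0,1[$ or $d>2$ is admissible — so they should be read as an empirical sweet spot between damping the oscillation of $\ak$ and retaining enough inertia, supported by the experiments of Section~\ref{sec:experiment} rather than derived; that calibration of the particular numbers is where I expect the real ``work'' to lie.
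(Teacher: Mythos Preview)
Your reading of the statement is correct: Proposition~\ref{prop:lazy-start} carries no formal claim beyond ``use these ranges,'' and the paper offers no proof environment for it at all. Your verification that every listed $(p,q,r)$ and $d$ falls under the hypotheses of Theorems~\ref{thm:rate-obj}--\ref{thm:rate-seq} and of \cite{chambolle2015convergence} is accurate, and your asymptotic $1-\ak\sim(1+2/p)/k$ (matching $3/k$ for FISTA-BT) is the right quantitative expression of ``lazy.''

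Where your approach and the paper's diverge is in what each regards as the content. You treat admissibility plus the monotonicity of $1-\ak$ in $p$ (or $d$) as the substance, and flag the endpoints $\sfrac{1}{80},\sfrac{1}{10},10,80$ as purely empirical. The paper, by contrast, spends all of Section~\ref{sec:lazy} \emph{deriving} those endpoints: it works on the quadratic model~\eqref{eq:lse}, writes the iteration as $\zkp=M_{d,k}\zk$, computes the leading-eigenvalue envelope $\calE_{d,k}=\prod_i|\rho_{d,i}|$ via~\eqref{eq:mag-rho}, and then passes to the continuous dynamical system~\eqref{eq:cds_x}, solves it in Bessel functions, and balances the truncation error $\epsilon$ against the discretisation error to obtain an optimal damping $\omega=-2\lambda_1^{1/2}\log\epsilon$, hence $d^\star_{\mathrm{tol}}\approx 10.75+4.6(-\mathrm{tol}-2)$. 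The ranges in the proposition are read off from this formula (and Figure~\ref{fig:Ek-d}) for typical tolerances. So your argument establishes that the strategy is \emph{safe}; the paper's analysis is aimed at showing it is \emph{near-optimal}, which is a genuinely different and much heavier piece of work. If you want to match the paper, that spectral-plus-ODE calibration is the missing component; if you only want a proof of well-posedness, what you wrote suffices.
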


\begin{remark}
The intervals for $p$ and $d$ are obtained from practical observations and not inclusive. Take FISTA-CD for example, there can be problems where $d<10$ or $d>80$ provides even faster performances. 
\end{remark}

The main reason of calling the above strategy ``lazy-start'' is that it slows down the speed of $\ak$ converging to $1$; Recall Figure~\ref{fig:tk-pqr}~(b). 
To discuss the advantage of lazy-start, we consider the simple least square problem: 
%
\beq\label{eq:lse}
\min_{x \in \bbR^{201}} \Ba{ F(x) \eqdef \sfrac{1}{2} \norm{Ax}^2 }  ,
\eeq
where $A \in \bbR^{201\times 201}$ is of the form
\[
A = 
\begin{bmatrix}
2 & -1 & & &    \\
-1 & 2 & -1 & &   \\
& \ddots & \ddots & \ddots & \\
& & -1 & 2 & -1  \\
& & & -1 & 2  \\
\end{bmatrix}_{201\times201}   .
\]
%
In this example, $F$ is strongly convex and admits a unique minimizer $\xsol = 0$. 

In what follows, we first discuss the advantage of lazy-start in the discrete setting, and then in the continuous dynamical system setting.

\subsection{Advantage of lazy-start}

Specialising FISTA-CD to solve \eqref{eq:lse}, we get
\beq\label{eq:fista-cd-lse}
\begin{aligned}
	\yk &= \xk + \tfrac{k-1}{k + d}(\xk - \xkm)	\\
	\xkp &= \yk - \tfrac{1}{L} A^TA\yk = (\Id - \tfrac{1}{L} A^TA) \yk	 .
\end{aligned}
\eeq
To show the benefits of lazy-start, two different values of $d$ are considered:
\begin{itemize}
	\item FISTA-CD with $d = 2$;
	\item Lazy-start FISTA-CD with $d = 20$. 
\end{itemize}
%
%
The convergence of $\norm{\xk-\xsol}$ for the two choices of $d$ are plotted in Figure~\ref{fig:cmp-lse}, where the {red line} represents $d = 2$ and the {black line} for $d = 20$. 
The starting points $x_0$ for both cases are the same and chosen such that $\norm{x_0-\xsol} = 1$. 
It can be observed that the lazy-start one is significantly faster than the normal choice after iteration step $k = 2\times 10^5$.

\begin{figure}[!ht]
	\centering \includegraphics[width=0.375\linewidth]{./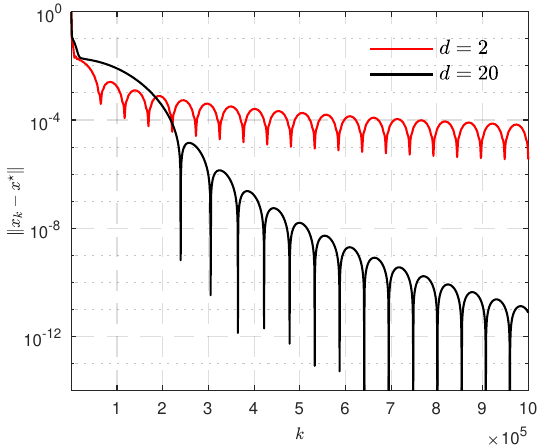}   \\
	\caption{Convergence comparison of $\norm{\xk-\xsol}$ of FISTA-CD for $d=2$ and $d=20$. } 
	\label{fig:cmp-lse}
\end{figure}

{\noindent}To explain such a difference, we need the following steps:
\begin{enumerate}[label = {\rm (\arabic{*})}]
	\item
	Fixed-point characterization of \eqref{eq:fista-cd-lse}: the iteration can be written as a linear system owing to the quadratic form of the problem; See \eqref{eq:Mk}.
	\item
	Spectral property of the linear system: the spectral property of the linear system is controlled only by $d$. 
	
	\item Advantage of lazy-start: comparison of spectral properties under different choices of $d$. 

\end{enumerate}
It is worth noting that, the convergence seen in Figure \ref{fig:cmp-lse} appears not only for \eqref{eq:lse}, but rather is observed in many problems; see Section~\ref{sec:experiment} for more examples.

\paragraph{Fixed-point formulation of \eqref{eq:fista-cd-lse}}
Denote $G = \Id - \tfrac{1}{L} A^TA$, we have from \eqref{eq:fista-cd-lse} that,
\[
\xkp - \xsol 
= G (\yk - \xsol) 
= (1 + \ak) G (\xk - \xsol) - \ak G (\xkm - \xsol)	.
\]
Define
\beq\label{eq:Mk}
\zk \eqdef \begin{pmatrix}
	\xk - \xsol \\ \xkm - \xsol	
\end{pmatrix}
\qandq
M_{d,k} \eqdef \begin{bmatrix}
	(1 + \ak) G & - \ak G		\\
	\Id & 0	
\end{bmatrix}	.
\eeq 
Then it is immediate that
\beq\label{eq:Mk-zk}
\zkp = M_{d,k} \zk	,
\eeq
which is the fixed-point characterization of \eqref{eq:fista-cd-lse}. 
Denote $\wMdk \eqdef \prod_{i=1}^{k-1} M_{d,k-i}$, then recursively apply the above relation, we get
\[
\zk = \wMdk z_{1}  .
\]

\paragraph{Spectral property of $\wMdk$}
From above it is immediate that 
\[
\norm{\zk} = \norm{\wMdk} \norm{z_{1}}  .
\]
To set up the comparison between $d=2$ and $d=20$, we need to compute spectral property of $\norm{\wMk}$: 
\begin{itemize}
\item Let $\rho_{d,i}$ be the \emph{leading eigenvalue} of $M_{d,i}$ for $i=1,...,k-1$, then there exists $\calC > 0$ such that 
	\beq\label{eq:norm-wMk}
	\norm{\wMdk} \leq \calE_{d,k} \eqdef \calC \mprod_{i=1}^{k-1} \abs{ \rho_{d,k-i} }	
	\eeq
	holds for all $k \geq 1$. We call $\calE_{d,k}$ the envelope of $\norm{\wMdk}$. Unfortunately, unlike the case of $M_{d,k}$, this time we can only discuss through numerical illustration. 
 

\item Let $\alpha$ be the smallest eigenvalue of $A^TA$ and $\eta = 1-\alpha/L$ the leading eigenvalue of $G$. Owing to the result of \cite{liang2017activity}, for each $M_{d,k}$, the magnitude of its leading eigenvalue $\rho_{d,k}$ reads: 
\beq\label{eq:mag-rho}
	\abs{\rho_{d,k}} = 
	\left\{
	\begin{aligned}
		\sfrac{ (1 + \ak)\eta + \sqrt{(1 + \ak)^2\eta^2 - 4\ak\eta} }{2} < 1 &: \ak \leq \asol	,	\\
		\sqrt{\ak \eta} < 1 &: \ak \geq \asol,
	\end{aligned}
	\right.
	\eeq
	where $\asol = \frac{1 - \sqrt{\alpha/L}}{1 + \sqrt{\alpha/L}}$. 
	Moreover, $\abs{\rho_k}$ attains the minimal value $\rho^\star = 1 - \sqrt{\alpha/L}$ when $\ak = \asol$ \cite{liang2017activity}. 
	

\end{itemize}
For more details about the dependence of $\rho_{d,k}$ on $\eta$ and $\ak$, we refer to \cite{liang2017activity,liang2016thesis}. 
Below we inspect the value of $\abs{\rho_{d,k}}$ under $d=2$ and $d=20$. 
The modulus of $\abs{\rho_{d,k}}$ for $d=2,20$ are shown in Figure~\ref{fig:why-1}~(a), where the red line is $\abs{\rho_{2,k}}$ and the black line stands for~$\abs{\rho_{20, k}}$: 
\begin{itemize}
	\item In both cases, the values of $\abs{\rho_{2,k}}, \abs{\rho_{20, k}}$ decrease first, until reaching $\rho^\star = 1 - \sqrt{\alpha/L}$, and then start to increase until they reach $\sqrt{\eta}$; 
	\item Choosing $d=20$ slows the speed at which $a_k$ is increasing (see Figure~\ref{fig:tk-pqr}), therefore also slows the speed at which $\abs{\rho_{20,k}}$ approaches $\rho^\star$. Such a difference in approach to $\rho^\star$ is key for the lazy-start strategy being faster. 
\end{itemize}
Denote $\Keq$ the point $\abs{\rho_{20, k}}$ equals to $\rho^\star$, then we have $\Keq = \ca{ \frac{1 + 20 \asol}{1 - \asol } }$. 

\begin{figure}[!ht]
	\centering
	\subfloat[Value of $\abs{\rho_{d,k}}$]{ \includegraphics[width=0.39\linewidth]{./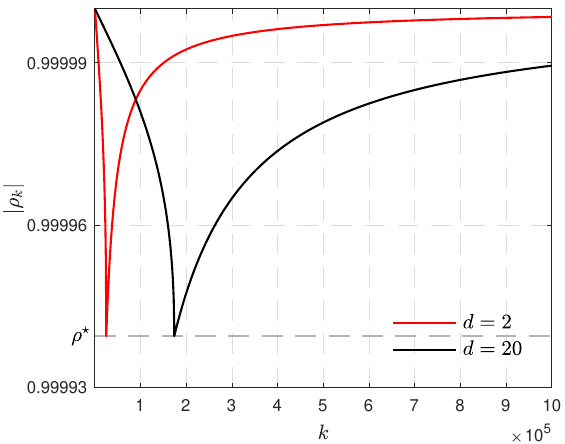} }  \hspace{24pt}
	\subfloat[Value of $\calE_{d, k}$]{ \includegraphics[width=0.375\linewidth]{./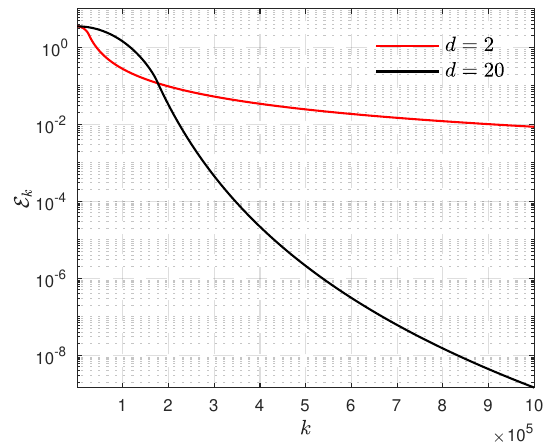} }  \\
	\caption{The value of $\abs{\rho_{d,k}}$ and $\calE_{d, k}$ under $d = 2, 20$.}
	\label{fig:why-1}
\end{figure}

\paragraph{The advantage of lazy-start}


Now we compare $\calE_{2, k}, \calE_{20, k}$, whose values are plotted in Figure~\ref{fig:why-1}~(b), where the red and black lines are corresponding to $\calE_{2, k}$ and $\calE_{20, k}$ respectively. 
Observe that, $\calE_{2, k}$ and $\calE_{20, k}$ intersect for certain $k$ which turns out very close to $\Keq$. 
For $k \geq \Keq$, the difference between $\calE_{2, k}$ and $\calE_{20, k}$ becomes increasingly large.

From \eqref{eq:mag-rho} and the definition of $\ak$, we have that for $k \geq \Keq$, 
\[
\textstyle \abs{\rho_{2, k}} 
= \sqrt{\frac{k-1}{k + 2} \eta}  
\geq
\abs{\rho_{20, k}} 
= \sqrt{\frac{k-1}{k + 20} \eta}  .
\]
Define the accumulation of $\frac{  \abs{\rho_{2, i}} }{  \abs{\rho_{20, i}} }$ by 
$
\calR_k 
\eqdef \prod_{i=\Keq}^{k} \frac{  \abs{\rho_{2, i}} }{  \abs{\rho_{20, i}} }
= \prod_{i=\Keq}^{k} \sqrt{ \frac{  i + 20 }{  i + 2 } }	
$ 
and let $k \geq \Keq + 36$, 
we get
\beq\label{eq:Rk-C}
\begin{aligned}
\calR_{k} 
&= \mprod_{i=\Keq}^{k} \sfrac{ \abs{\rho_{d_{1}, i}}  }{ \abs{\rho_{d_{2}, i}}  }
= \mprod_{i=\Keq}^{k}	\sqrt{ \sfrac{ i + 20 } { i + 2 } }		\\ 
&= \mprod_{i=\Keq}^{k} \Ppa{ \sfrac{ \tcr{\Keq + 20}} { \Keq + 2}  \sfrac{ \Keq + 1  + 20} { \Keq + 1 + 2} \dotsm  \sfrac{ \Keq + 17  + 20} { \Keq + 17  + 2} \sfrac{ \Keq + 18  + 20} { \tcr{\Keq + 18  + 2}}  \dotsm \sfrac{ k-2 + 20} { k-2 + 2} \sfrac{ k-1 + 20} { k-1 + 2} \sfrac{ k + 20} { k + 2} }^{1/2}		\\
&= \mprod_{j=0}^{17} \bPa{ \sfrac{ k + 3 + j} { \Keq + 2 + j} }^{1/2}	
\approx \BPa{ \sfrac{ k + 20} { \Keq + 19}  }^{9}	
= \Ppa{ \sfrac{ 2 }{ \sqrt{C} + 1 }  }^{9} \Ppa{ \sfrac{ k + 20 } { 21 }  }^{9} ,
\end{aligned}
\eeq
where $C \eqdef L/\alpha$ is the condition number of \eqref{eq:lse}. 
%
To verify the accuracy of the above approximation, for the considered problem \eqref{eq:lse}, we have $
L = 16		$ and $
\alpha = 5.85 \times 10^{-8}$.
Consequently, $C = \frac{L}{\alpha} = 2.735 \times 10^{8}$. Let $k = 10^{6}$ and substitute them into \eqref{eq:Rk-C}, we have $\calR_{k} \approx 5.98 \times 10^6$, while for $\calE_{d, k}$ we have 
\[
\sfrac{ \calE_{2, k=10^6} }{ \calE_{20, k=10^6} } 
= 5.96 \times 10^6 ,
\]
which means \eqref{eq:Rk-C} is a good approximation of the envelope ratio $\calE_{2, k}/\calE_{20, k}$. 

The above discussion is mainly about the envelope $\calE_{d, k}$. In terms of what really happens on $\norm{\xk-\xsol}$ for $d=2$ and $d=20$: from Figure~\ref{fig:cmp-lse}, we have that for $k=10^6$, $\norm{\xk-\xsol}$ of $d=2$ is about $2 \times 10^6$ larger than that of $d=20$. Compared with $5.98 \times 10^6$, we can conclude that \eqref{eq:Rk-C} is able to accurately estimate the order of acceleration obtained by a lazy-start strategy.

\subsection{Quantifying the advantage of lazy-start}

The approximation \eqref{eq:Rk-C} indicates that $\calR_{k}$ is a function of $C$ and $k$, in the following we discuss the dependence of $\calR_{k}$ on $C$ and $k$ from two perspectives.


\paragraph{Fixed $k$}
First consider $C \in [10^{4}, 10^{12}]$ and let $k = \Keq + 10^6$, note that $\Keq$ is changing over $C$.  
This setting is to check how much better $d=20$ is than $d=2$ in terms of $\norm{\xk-\xsol}$ if we run the iteration \eqref{eq:fista-cd-lse} $10^6$ more steps after $\Keq$. 
The obtained value of $\calR_{k}$ is shown in Figure~\ref{fig:fix-k-Keq}~(a). As we can see, when $C$ is small, \eg $C = 10^4$, the advantage can be as large as $10^{27}$ times and decrease to almost $1$ for $C = 10^{12}$. However, it should be noted that for this large $C$, $\Keq + 10^6$ steps of iteration could be not enough for producing a satisfactory output.

\paragraph{Fixed $\calR_k$}
The second part is to check for fixed $\calR_{k} = \calR$, \eg $\calR = 10^5$, how many more steps are needed after $\Keq$. From \eqref{eq:Rk-C}, simple calculation yields
\[
k - \Keq =  \sfrac{21(\sqrt{C} + 1)}{2} \sqrt[9]{\calR} - 20 .
\] 
Let again $C \in [10^4, 10^{12}]$, the value of $k - \Keq$ is shown in Figure~\ref{fig:fix-k-Keq}~(b). We can observe that when $C = 10^4$, only around $2,000$ steps are needed, while about $2\times 10^7$ steps are needed for $C = 10^{12}$.

\begin{figure}[!ht]
	\centering
	\subfloat[Value of $\calR_{k}$ when fix $k = \Keq + 10^6$]{ \includegraphics[width=0.375\linewidth]{./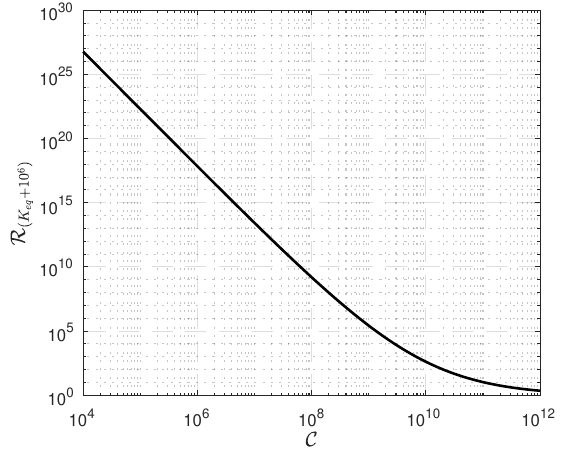} }  \hspace{24pt}
	\subfloat[Value of $k-\Keq$ when fix $\calR_k = 10^5$]{ \includegraphics[width=0.375\linewidth]{./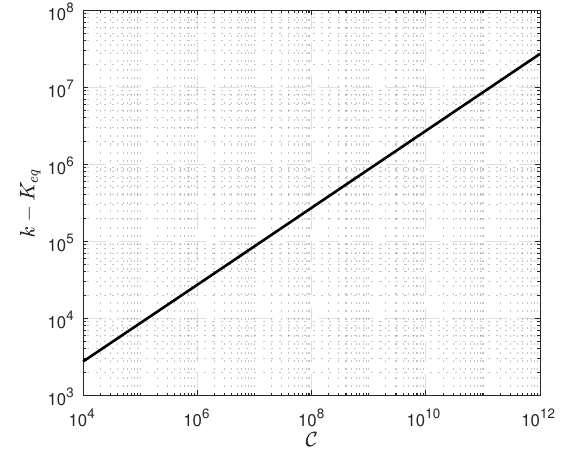} }  \\
	\caption{The dependence of $\calR_k$ on the iteration number $k$ and the condition number $C$. }
	\label{fig:fix-k-Keq}
\end{figure}

\begin{remark}
	When $C$ and $k$ are fixed, $\calR_k$ increases with $d$. This means if we consider only $\calR_k$, then the larger value of $d$ the better. However, one should not do so in practice, as larger $d$ will make the value of $\Keq$ much larger. As a result, proper choice of $d$ is a trade-off between $\Keq$ and $\calR_k$, which is the content of the next part.  
\end{remark}

\subsection{Continuous dynamical system perspective}

The above discussion implies the existence optimal choices of $d$. From continuous dynamical system perspective, we show that an optimal $d$ does indeed exists. What is interesting is that the optimal $d$ does not depend on condition number of the problem, but the accuracy of solution. The analysis is inspired by the result of \cite{su2014differential}.

\subsubsection{Optimal choice damping coefficient}

%
%
To prove the claim, we start from continuous dynamical system \eqref{eq:cds_x} first, showing that larger values of $\omega$ below leads to faster convergence, and then back to the discrete setting for the proposed claim.

For problem \eqref{eq:lse}, the associated continuous dynamical system reads: 
\beq\label{eq:cds_x}
    \ddot{x} + \sfrac{\omega}{t}\dot{x} + A^TA x = 0 ,
\eeq
where $\omega$ is the damping coefficient. 
Since $A^TA$ is symmetric, it can be diagonalised with invertible matrix $P$ and diagonal matrix $\Lambda=\mathrm{diag}(\lambda_1,\cdots,\lambda_n)$: $A^TA = P \Lambda P^{-1}$.
Let $y=P^{-1}x$, then we get
\beqn
    \ddot{y} + \sfrac{\omega}{t}\dot{y} + \Lambda y=0.
\eeqn
Since $\Lambda$ is diagonal, it is sufficient to consider each entry of $y$ that
\beqn
    \ddot{y}_i + \sfrac{\omega}{t}\dot{y}_i + \lambda_i y_i=0,\quad i=1,\cdots,n , 
\eeqn
where $n$ is the dimension of the problem. 
Let $\omega_i=\omega\lambda_i^{-1/2}$, $\nu_i=\frac{\omega_i-1}{2}$ and $z_i(t)=t^{\nu_i}y_i(\lambda_i^{-1/2}t)$ for $i=1,\cdots,n$. This change of variables results in Bessel's differential equations \cite{su2014differential}:
\beqn
    t^2\ddot{z}_i + t\dot{z}_i + (t^2-\nu_i^2)z_i=0,\quad i=1,\cdots,n ,
\eeqn
whose solution is
\beqn
    z_i=c_{i,1}J_{\nu_i} + c_{i,2}Y_{\nu_i},\quad i=1,\cdots,n,
\eeqn
where $J_{\nu_i}$ and $Y_{\nu_i}$ are the first and second kind of Bessel functions. Therefore, we get for $y_i$ that 
\beqn
\begin{aligned}
    y_i(\lambda_i^{-1/2}t)
    &= t^{-\nu_i}z_i(t)=t^{-\nu_i}\Pa{ c_{i,1}J_{\nu_i}(t) + c_{i,2}Y_{\nu_i}(t) } ,\\
    y_i(t)
    &= (\lambda_i^{1/2}t)^{-\nu_i}\Pa{ c_{i,1}J_{\nu_i}(\lambda_i^{1/2}t) + c_{i,2}Y_{\nu_i}(\lambda_i^{1/2}t)} .
\end{aligned}
\eeqn
For $J_{\nu_i}$ and $Y_{\nu_i}$, recall the following asymptotic forms of Bessel functions for positive and large argument $t$:
\[
\begin{aligned}
    J_\nu(t)
    =\sqrt{\sfrac{2}{\pi t}}\bPa{ \cos\Pa{ t-\sfrac{\nu\pi}{2}-\sfrac{\pi}{4} } + O(t^{-1}) } \qandq
    Y_\nu(t)
    =\sqrt{\sfrac{2}{\pi t}}\bPa{ \sin\Pa{ t-\sfrac{\nu\pi}{2}-\sfrac{\pi}{4} } + O(t^{-1}) }.
\end{aligned}
\]
As a result, 
\[
\begin{aligned}
    J_{\nu_i}(\lambda_i^{1/2}t)
    &\textstyle=\sqrt{\frac{2}{\pi\lambda_i^{1/2} t}}\bPa{ \cos\Pa{ \lambda_i^{1/2}t-\frac{(\omega\lambda_i^{-1/2}-1)\pi}{4}-\frac{\pi}{4} } + O(t^{-1}) } 
    \textstyle=\sqrt{\frac{2}{\pi\lambda_i^{1/2} t}}\bPa{ \cos\Pa{ \lambda_i^{1/2}t-\frac{\omega\lambda_i^{-1/2}\pi}{4} } + O(t^{-1}) },\\
    Y_{\nu_i}(\lambda_i^{1/2}t)
    &\textstyle=\sqrt{\frac{2}{\pi\lambda_i^{1/2} t}}\bPa{ \sin\Pa{ \lambda_i^{1/2}t-\frac{\omega\lambda_i^{-1/2}\pi}{4} }  + O(t^{-1}) }.
\end{aligned}
\]
Eventually, we get for $y_i$ that 
\beq
    y_i(t)=\sqrt{c_{i,1}^2 + c_{i,2}^2}\sqrt{\sfrac{2}{\pi}}\lambda_i^{-\frac{\omega_i}{4}}t^{-\frac{\omega_i}{2}}\sin \Pa{ \lambda_i^{1/2}t-\sfrac{\omega\lambda_i^{-1/2}\pi}{4} + \theta_i } + O(t^{-1-\frac{\omega_i}{2}}),\label{eq..y_iAsymptotic}
\eeq
where $\theta_i=\arctan\frac{c_{i,1}}{c_{i,2}}$ depends on $c_{i,1}$ and $c_{i,2}$ which are determined by the initial condition.


From the above asymptotics, we conclude that, in the continuum case (\ie ODEs), the convergence is faster for larger $\omega$. However, in the discrete case, we have to also consider the numerical error.
We consider the following FISTA-CD scheme
\[
\begin{aligned}
    y_k
    &= x_k + \tfrac{k-1}{k + d}(x_k-x_{k-1}),\\
    x_{k + 1}
    &=y_{k}-\gamma\nabla F(y_k),
\end{aligned}
\]
where $d=\omega-1$. Note that $x_k\approx x(k\tau)$ with step-size $\tau=\sqrt{\gamma}$. The algorithm is then rewritten as
\beqn
  \sfrac{x_{k + 1}-x_k}{\tau} = \sfrac{k-1}{k + d}\sfrac{x_k-x-_{k-1}}{\tau}-\tau\nabla F(y_k).
\eeqn
By Taylor expansion in $\tau$, we have
\[
\begin{aligned}
    \dot{x}(t) + \sfrac{1}{2}\ddot{x}(t)\tau + o(\tau)
    &= \sfrac{t-\tau}{t + d\tau}\Pa{ \dot{x}(t)-\sfrac{1}{2}\ddot{x}(t)\tau + o(\tau) } - \tau\nabla F(x(t)) + o(\tau)\\
    &= (1-\tfrac{\omega\tau}{t})\Pa{ \dot{x}(t)-\sfrac{1}{2}\ddot{x}(t)\tau + o(\tau) } - \tau\nabla F(x(t)) + o(\tau).
\end{aligned}
\]
Note that in the last step we have applied expansion
\beq
    \sfrac{t-\tau}{t + d\tau}=1-\sfrac{\omega\tau}{t + (\omega-1)\tau}=1-\sfrac{\omega\tau}{t} + \sfrac{\omega(\omega-1)\tau^2}{t^2} + \cdots.
\eeq
This makes sense only for $\frac{(\omega-1)\tau}{t}<1$. More precisely, the numerical error at time $T$ is $\epsilon_\text{num}=\frac{\omega\tau}{T}$.

{
By approximation \eqref{eq..y_iAsymptotic}, the truncation error (tolerance) is $\epsilon=|x(T)-x( + \infty)|=|x(T)|=\lambda_1^{-\frac{\bar{\omega}}{4}}T^{-\frac{\bar{\omega}}{2}}$ where $\bar{\omega}=\max_{1\leq i\leq n}\{\omega_i\}$. 
Thus $T^{-1}=\epsilon^{\frac{2}{\bar{\omega}}}\lambda_1^{\frac{1}{2}}$ and $\epsilon_\text{num}=\tau\lambda_1^{\frac{1}{2}} {\omega}\epsilon^{\frac{2}{\bar{\omega}}}$. We need to minimize 
\[
\textstyle \log \epsilon_\text{num}
= { \log(\tau\lambda_1^{1/2}) + \log{\omega} + \frac{2}{\bar{\omega}}\log\epsilon }
= { \log(\tau\lambda_1^{1/2}) + \log{\omega} + \frac{2\lambda_1^{1/2}}{\bar{\omega}}\log\epsilon } ,\enskip \bar{\omega} \geq 3 ,
\]
which leads to $0 =  \frac{1}{\bar{\omega}} + \frac{2\lambda_1^{1/2}}{\bar{\omega}^2}\log\epsilon$. 
As a result, the optimal choice of $\bar{\omega}$ is $\omega=-2\lambda_1^{1/2}\log\epsilon$, hence $-2\lambda_1^{1/2}\log\epsilon - 1$ for $d$. 
}

\subsubsection{Optimal lazy-start parameters}

Now we turn to the discrete case and discuss the optimal $d$, through the envelope $\calE_{d, k}$. 

\paragraph{Optimal $d$ for $\norm{\xk-\xsol}$}
We continue using problem \eqref{eq:lse}, with condition number $C = 2.735 \times 10^{8}$. 
Consider several different values of $d$ which are $d \in [5, 15, 25, 35, 45]$. The values of corresponding $\calE_{d, k}$ are plotted in Figure~\ref{fig:Ek-d}~(a). For each $k \in [1, 10^6]$, the minimum of $\calE_{d, k}$ is computed and plotted as a {red dotted line}. 

From Figure~\ref{fig:Ek-d}~(a), it can be observed that for each $d \in [5, 15, 25, 35, 45]$, their corresponding $\calE_{d, k}$ is the smallest for a certain range of $k$. For instance, for $d = 5$, $\calE_{5,k}$ is the smallest for $k$ between $1$ and about $1.75\times 10^5$. This verifies the result from continuous dynamical system that
\begin{itemize}
	\item There exists an optimal choice of $d$;
	\item The optimal $d$ depends on the accuracy of $\xk$. 
\end{itemize}
To illustrate, we consider the following test: under a given tolerance $\tol \in \ba{-2,...,-10}$, for each $d \in [2, 100]$ compute the minimal number of iterations, \ie $k$, needed such that
\[
\log(\calE_{d,k}) \leq \tol . 
\]
The obtained results are shown in Figure~\ref{fig:Ek-d}~(b), from where we can observe that for each $\tol \in \ba{-2,...,-10}$, the corresponding $k$ is a smooth curve that admits a minimal value $k^\star_{\tol}$ for optimal $d^\star_{\tol}$. 
The red line segment connects all the points of $(d^\star_{\tol}, k^\star_{\tol})$ which almost is a straight line. It indicates that one should choose small $d$ for high accuracy and increase the value for lower accuracy.

\begin{figure}[!ht]
	\centering
	\subfloat[{Comparison of $\calE_{d, k}$ for different $d$}]{ \includegraphics[width=0.36\linewidth]{./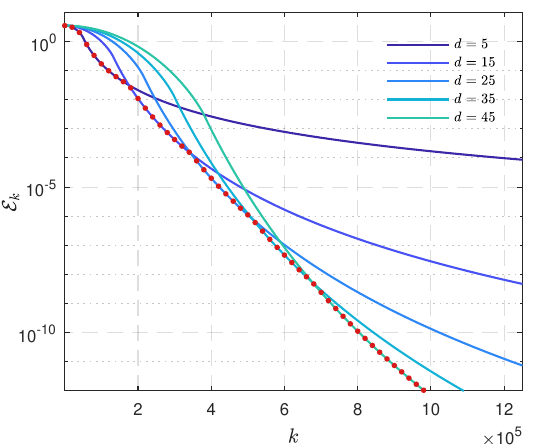} }  \hspace{24pt}
	\subfloat[{Value of $k$ for $\log(\calE_{d,k}) \leq \tol$}]{ \includegraphics[width=0.345\linewidth]{./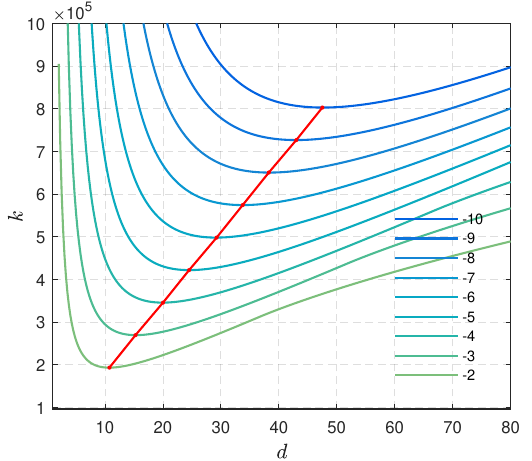} } \\[-1mm]
	\subfloat[{Optimal $d$ over $\tol$ under different $C$}]{ \includegraphics[width=0.347\linewidth]{./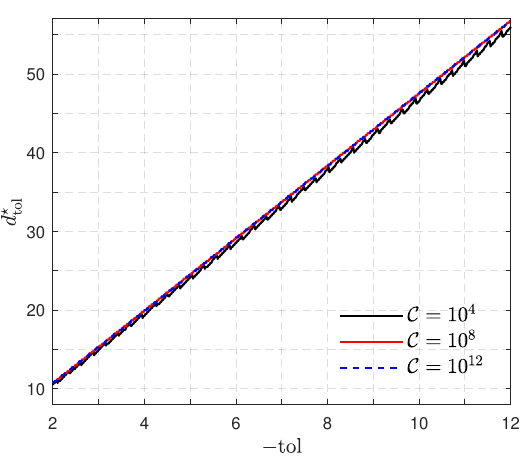} }  \hspace{24pt}
	\subfloat[{Difference between $\norm{\xk-\xsol}$ and $\norm{\xk-\xkm}$}]{ \includegraphics[width=0.36\linewidth]{./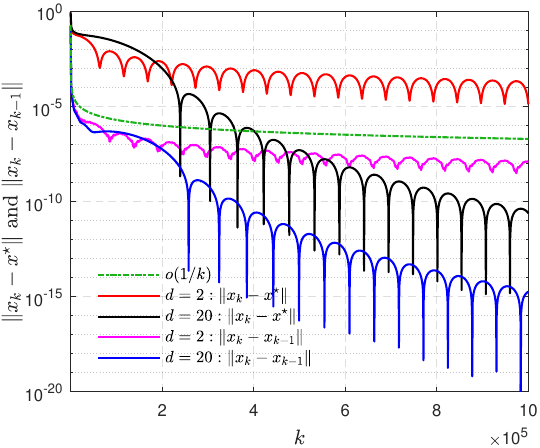} } 		\\
	\caption{Optimal choices of $d$ under different stopping tolerance.} 
	\label{fig:Ek-d}
\end{figure}

The red line in Figure~\ref{fig:Ek-d}~(b) accounts only for condition number $C = 2.735 \times 10^{8}$. In Figure~\ref{fig:Ek-d}~(c), we consider three different condition numbers $C \in \ba{10^4, 10^8, 10^{12}}$ and plot their corresponding optimal choices of $d$ under different $\tol$. Surprisingly, the obtained optimal choices for each $C$ are almost same, especially for $C = 10^8, 10^{12}$. From these three lines, we fit the following linear function
\[
d_{\tol}^\star = 10.75 + 4.6(-\tol-2)  ,
\]
which can be used to compute the optimal $d$ for a given stopping criterion on $\norm{\xk-\xsol}$.


\paragraph{Optimal $d$ for $\norm{\xk-\xkm}$}

To this point, we have presented detailed analysis on the advantage of lazy-start strategy. However, the analysis is conducted via the envelope $\calE_{d,k}$ of $\norm{\xk-\xsol}$ which requires the solution $\xsol$. While in practice, only $\norm{\xk-\xkm}$ is available, which makes the above discussion on optimal $d$ not practically useful. Therefore, we discuss briefly below on how to adapt the above result to $\norm{\xk-\xkm}$.

In Figure~\ref{fig:Ek-d} (d) we plot both $\norm{\xk-\xsol}$ and $\norm{\xk-\xkm}$ for the considered problem~\eqref{eq:lse} with $d=2$ and $d=20$. The red and magenta lines are for $d=2$ while the black and blue lines are for $d=20$. It can be observed that $\norm{\xk-\xkm}$ is several orders smaller than $\norm{\xk-\xsol}$, which is caused by the significant decay at the beginning of $\norm{\xk-\xkm}$, which is due to the fact that at beginning the convergence of $\norm{\xk-\xkm}$ is governed by the $o(1/k)$ rate established in Theorem~\ref{thm:rate-seq}; see the green dot-dash line. 

If we discard the beginning part of $\norm{\xk-\xkm}$, then the remainder can be seen as scaled $\norm{\xk-\xsol}$, \ie $\norm{\xk-\xkm} \approx \norm{\xk-\xsol}/10^s$ for some $s > 0$. Therefore, if some prior about this shift could be available, then the optimal choice of $d$ would be
\[
d_{\tol}^\star = 10.75 + 4.6(-\tol-2 - s) .
\]
For a given problem, in practice the value of $s$ can be estimated through the following strategy:
\begin{itemize}
	\item Run the FISTA iteration for sufficient number of iterations (\eg $3\times10^5$ steps in Figure \ref{fig:Ek-d}~(d)) and obtain a rough solution $\tilde{x}$ and also record the residual sequence $\norm{\xk-\xkm}$. 
	\item Rerun the iteration again (\eg for $10^5$ steps) and output the value of $\norm{\xk-\tilde{x}}$. Comparing $\norm{\xk-\xkm}$ and $\norm{\xk-\tilde{x}}$ one can then obtain an estimation of $s$. 
\end{itemize}
In practice, one can also simply choose $d \in [10, 80]$ which can provide consistent faster performance.

\begin{remark}$~$
	\begin{itemize}
		\item The discussion has been conducted through FISTA-CD, to extend the result to the case of FISTA-Mod, we may simply take $p = \frac{1}{d}$ and let $q \in ]0, 1]$. As we have seen from Figure~\ref{fig:tk-pqr}, the correspondence between FISTA-CD and FISTA-Mod is roughly $p = \frac{1}{d}$. 
		
		\item The discussion of this section considers only the least square problem \eqref{eq:lse} which is very simple. However, this does not mean that lazy-start strategy will fail for more complicated problems such as \eqref{eq:min-problem}, see Section~\ref{sec:experiment} for evidence of this. 
	\end{itemize}
\end{remark}

\section{Adaptive acceleration}\label{sec:adaptive}

We have discussed the advantages of the proposed FISTA-Mod scheme, particularly the lazy-start strategy. However, despite the advantage brought by lazy-start, FISTA-Mod and FISTA-CD still suffer the same drawback of FISTA-BT: the oscillation of $\Phi(\xk)-\Phi(\xsol)$ and $\norm{\xk-\xsol}$ as shown in Figure~\ref{fig:cmp-lse}. 
Therefore, in this section we discuss adaptive approaches to avoid oscillation. Note that here we only discuss adaptation to inertia, and refer to \cite{calatroni2017backtracking} for backtracking strategies for Lipschitz constant $L$.

The presented acceleration schemes cover two different cases: strong convexity is explicitly available, strong convexity is unknown (or $0$). For the first case, the optimal parameter choices are available. While for the latter, we need to adaptively estimate the (local) strong convexity. 

\subsection{Strong convexity is available}

For this case, we assume that $F$ of \eqref{eq:min-problem} is $\alpha$-strongly convex and $R$ is only convex, and derive the optimal setting of $p, q$ and $r$ for FISTA-Mod. 
Recall that under step-size $\gamma$, the optimal inertial parameter is $a^\star = \frac{1 - \sqrt{\gamma\alpha}}{1 + \sqrt{\gamma\alpha}}$. 
From~\eqref{eq:fista-pqr-ak} the limiting value of $\ak$, we have that for given $p,q \in ]0, 1]$, $r$ should be chosen such that
\[
\sfrac{2p+\sqrt{rp^2 + (4-r)q} - (4-r)}{2p+\sqrt{rp^2 + (4-r)q}} 
= \sfrac{1 - \sqrt{\gamma\alpha}}{1 + \sqrt{\gamma\alpha}}  .
\]
%
Solve the above equation we get the optimal choice of $r$ which reads
\beq\label{eq:opt-r}
\begin{aligned}
r = f(\alpha, \gamma; p,q) 
&\eqdef 4(1-p) + 4p \asol + (p^2 - q) (1-\asol)^2  \\
&= 4(1-p) + \sfrac{4p(1 - \sqrt{\gamma\alpha})}{1 + \sqrt{\gamma\alpha}}  + \sfrac{4\gamma\alpha(p^2 - q)}{ (1 + \sqrt{\gamma\alpha})^2} \leq 4 . 
\end{aligned}
\eeq
Note that we have $f(\alpha, \gamma; p,q) = 4$ for $\alpha = 0$, and $f(\alpha, \gamma; p,q) < 4$ for $\alpha >0$. 

Based on the above result, we propose below a generalization of FISTA-Mod which is able to adapt to the strong convexity of the problem to solve.

\begin{center}
	\begin{minipage}{0.95\linewidth}
		\begin{algorithm}[H]
			\caption{Strongly convex FISTA-Mod ({\bf $\alpha$-FISTA})} \label{alg:alpha-fista}
			{\noindent{\bf{Initial}}}: let $p, q > 0$ and $\gamma \leq 1/L$. For $\alpha \geq 0$, choose $r$ as $ r = f(\alpha, \gamma; p, q)$. Let $t_0 \geq 1$, {\hspace*{3.25em}}and $x_{0} \in \bbR^n, x_{-1} = x_{0}$. \\ 
			\Repeat{convergence}{ \vspace{-1em}
				\beq \label{eq:alpha-fista}
				\begin{gathered}
				\textstyle \tk = \frac{p + \sqrt{q + r\tkm^2}}{2}  , ~~~  \ak = \frac{\tkm-1}{\tk}  ,  \\
				\yk = \xk + \ak\pa{\xk-\xkm}    ,  \\
				\xkp = \prox_{\gamma R}\Pa{\yk - \gamma \nabla F(\yk)} . 
				\end{gathered}
				\eeq
			}
		\end{algorithm}
	\end{minipage}
\end{center}

\begin{remark}$~$
Since $f(\alpha, \gamma; p,q) = 4$ when $\alpha = 0$, the above algorithm mains the $o(1/k^2)$ convergence rate for non-strongly convex case, and in general we have the following convergence property for $\alpha$-FISTA,
		\[
		\Phi(\xk) - \Phi(\xsol)
		\leq \calC \min \Ba{ \tfrac{2L}{p^2(k+1)^2} , (1 - \sqrt{\gamma\alpha})^k } ,
		\]
		where $\calC>0$ is a constant.	

\end{remark}

\paragraph{Relation with \cite{calatroni2017backtracking}} 
Recently, combing FISTA scheme with strong convexity was studied in \cite{calatroni2017backtracking} where the authors also propose a generalization of FISTA scheme for strongly convex problems. They consider the case that $R$ is $\alpha_R$-strongly convex and $F$ is $\alpha_F$-strongly convex, and the whole problem is then $(\alpha = \alpha_R + \alpha_F)$-strongly convex. 
In \cite[Algorithm~1]{calatroni2017backtracking}, the following update rule of $\tk$ is considered
\beq\label{eq:luca}
\textstyle \tk = \frac{ 1 - q\tkm^2 + \sqrt{ (1-q\tkm^2)^2 + 4\tkm^2 } }{ 2 }	\qandq
\ak = \frac{\tkm-1}{\tk} \frac{1+\gamma\alpha_R-\tk\gamma\alpha}{1-\gamma\alpha_F}	,
\eeq
where $q = \frac{\gamma\alpha}{1+\gamma\alpha_R}$. 
As we shall see later in Section~\ref{sec:nesterov}, the above update rule is equivalent to Nesterov's optimal scheme \cite{nesterov2004introductory}; see also \cite{chambolle2016introduction} for discussions. 

When $\alpha > 0$, then \cite[Algorithm~1]{calatroni2017backtracking} achieves $O((1-\sqrt{q})^k)$ linear convergence rate. 
When $\alpha_R = 0, \alpha_F > 0$, we have $1-\sqrt{q} = 1-\sqrt{\gamma\alpha}$ which means \cite[Algorithm~1]{calatroni2017backtracking} and $\alpha$-FISTA achieves the same optimal rate. 
However, if both $\alpha_R > 0$ and $\alpha_F \geq 0$, then $1 - \sqrt{ \frac{\gamma\alpha}{1+\gamma\alpha_R} }
> 1 - \sqrt{\gamma\alpha} $, 
which means \eqref{eq:luca} achieves a sub-optimal convergence rate. 
%
As a matter of fact, if we transfer the strong convexity of $R$ to $F$, that is 
\[
R \eqdef R - \qfrac{\alpha_R}{2}\norm{x}^2	\qandq
F \eqdef F + \qfrac{\alpha_R}{2}\norm{x}^2	. 
\]
Then $R$ is convex and $F$ is $\alpha$-strongly convex, and the optimal rate would be $1-\sqrt{\gamma\alpha}$. 
Moreover, Moreover, redefining $R$ does not affect the complexity of computing $\prox_{\gamma R}$, as it is simply quadratic perturbation of proximity operator \cite[Lemma~2.6]{combettes2005signal}.

\subsection{Strong convexity is not available}

The goal of $\alpha$-FISTA is to avoid the oscillatory behavior of the FISTA schemes. In the literature, an efficient way to deal with oscillation is the restarting technique developed in \cite{o2012adaptive}. 
The basic idea of restarting is that, once the objective function value of $\Phi(\xk)$ is about to increase, the algorithm resets $t_k$ and $\yk$. 
Doing so, the algorithm achieves an almost monotonic convergence in terms of $\Phi(\xk) - \Phi(\xsol)$, and can be significantly faster than the original scheme; see \cite{o2012adaptive} or Section~\ref{sec:experiment} for detailed comparisons.

The strong convexity adaptive $\alpha$-FISTA (Algorithm~\ref{alg:alpha-fista}) considers only the situation where the strong convexity is explicitly available, which is very often not the case in practice. Moreover, the oscillatory behavior is independent of the strong convexity. As a consequence, an adaptive scheme is needed such that the following scenarios can be covered
\begin{itemize}
	\item $\Phi$ is \emph{globally} strongly convex with unknown modulus $\alpha$;
	\item $\Phi$ is \emph{locally} strongly convex with unknown modulus $\alpha$. 
	\item $\Phi$ is neither \emph{globally} nor \emph{locally} strongly convex;
\end{itemize}
%
Estimating the strong convexity in general is time consuming. 
Therefore, an efficient estimation approach is also needed. 
To address these problems, we propose a restarting adaptive scheme (Algorithm~\ref{alg:rada-fista}), which combines the restarting technique of \cite{o2012adaptive} and $\alpha$-FISTA.

\begin{center}
	\begin{minipage}{0.95\linewidth}
		\begin{algorithm}[H]
			\caption{Restarting and Adaptive $\alpha$-FISTA ({\bf Rada-FISTA})} \label{alg:rada-fista}
			{\noindent{\bf{Initial}}}: $p , q \in ]0, 1], r = 4$ and $\xi < 1$, $t_0 = 1, \gamma = 1/L$ and $x_{0} \in \calH, x_{-1} = x_{0}$. \\
			\Repeat{convergence}{
				$\bullet$~~\textrm{Run FISTA-Mod:} \vspace{-1.5em} \\
				\beqn
				\begin{gathered}
				\textstyle t_{k} = \frac{p + \sqrt{q + r t_{k-1}^2}}{2}  , ~~~  \ak = \frac{t_{k-1}-1}{t_{k}}  ,  \\
				\yk = \xk + \ak\pa{\xk-\xkm}    ,  \\
				\xkp = \prox_{\gamma R}\Pa{\yk - \gamma \nabla F(\yk)}   . 
				\end{gathered}  
				\eeqn
				$\bullet$~~\textrm{Restarting: if $(\yk - \xkp)^T(\xkp-\xk) \geq 0$,} \\
				\hspace{2.5em}{$\circ$}~~\textrm{Option I: $r = \xi r$ and $\yk = \xk$;}  \\
				\hspace{2.5em}{$\circ$}~~\textrm{Option II: $r = \xi r, t_k = 1$ and $\yk=\xk$.}
			}
		\end{algorithm}
	\end{minipage}
\end{center}

{\noindent}For the rest of the paper, we shall refer to Algorithm~\ref{alg:rada-fista} as ``Rada-FISTA''. Below, we provide some discussions:
\begin{itemize}
	\item 
	Compared to $\alpha$-FISTA, the main difference of Rada-FISTA is the restarting step which is originally proposed in \cite{o2012adaptive}. Such a strategy can successfully avoid the oscillatory behavior of $\Phi(\xk) - \Phi(\xsol)$. 
	\item 
	We provide two different options for the restarting step. In both options, we reset $\yk$ as in~\cite{o2012adaptive}. Meanwhile, we also rescale the value of $r$ by a factor $\xi$ which is strictly smaller than $1$. The purpose of rescaling is to approximate the optimal choice of $r$ in \eqref{eq:opt-r}.
	\item
	The difference between the two options is that $\tk$ is not reset to $1$ in ``Option I''. 
	Doing so, ``Option I'' will restart for more times than ``Option II'', however it will achieve faster practical performance; see Section~\ref{sec:experiment} the numerical experiments. It is worth noting that, for the restarting FISTA of \cite{o2012adaptive}, removing resetting $\tk$ could also lead to an acceleration. 
	
\end{itemize}
%

\subsection{Greedy FISTA}

We conclude this section by discussing how to further improve the performance of the restarting technique, achieving an even faster performance than Rada-FISTA and restarting FISTA \cite{o2012adaptive}.

The oscillation of FISTA schemes is caused by the fact that $\ak \to 1$. For the restarting scheme \cite{o2012adaptive}, resetting $\tk$ to $1$ forces $\ak$ to increase from $0$ again, become close enough to $1$ and cause the next oscillation, then the scheme restarts. 
With such a loop, if we can shorten the gap between two restarts, then maybe extra acceleration could be obtained. It turns out that using constant $\ak$ (close or equal to $1$) can achieve this goal. 
Therefore, we propose the following greedy restarting scheme.

\begin{center}
	\begin{minipage}{0.95\linewidth}
		\begin{algorithm}[H]
			\caption{Greedy FISTA} \label{alg:greedy} 
			{\noindent{\bf{Initial}}}: let $\gamma \in [\frac{1}{L}, \frac{2}{L}[$ and $\xi < 1, S > 1$, choose $x_{0} \in \bbR^n, x_{-1} = x_{0}$. \\ 
			\Repeat{convergence}{
				$\bullet$~~\textrm{Run the iteration:} \vspace{-1.5em} \\
				\beq \label{eq:greedy}
				\begin{aligned}
					\yk &= \xk + (\xk - \xkm) , \\ 
					\xkp &= \prox_{\gamma R}\Pa{\yk - \gamma \nabla F(\yk)} . 
				\end{aligned}
				\eeq
				$\bullet$~~\textrm{Restarting: if $(\yk - \xkp)^T(\xkp-\xk) \geq 0$, then $\yk = \xk$;} \\
				$\bullet$~~\textrm{Safeguard: if $\norm{\xkp-\xk} \geq S \norm{x_1-x_0}$, then $\gamma = \max\ba{\xi\gamma, \frac{1}{L}}$;} 
			}
		\end{algorithm}
	\end{minipage}
\end{center}

We abuse the notation by calling the above algorithm ``Greedy FISTA'', which uses constant inertial parameter $\ak \equiv 1$ for the momentum term:
\begin{itemize}
	\item A larger step-size (than $1/L$) is chosen for $\gamma$, which can further shorten the oscillation period;
	\item As such a large step-size may lead to divergence, we add a ``safeguard'' step to ensure the convergence. This step shrinkages the value of $\gamma$ when certain condition (\eg $\norm{\xkp-\xk} \geq S \norm{x_1-x_0}$) is satisfied. Eventually we will have $\gamma = 1/L$ if the safeguard is activated a sufficient number of times. 
\end{itemize}
In practice, we find that $\gamma \in [1/L, 1.3/L]$ provides faster performance than Rada-FISTA and restarting FISTA of \cite{o2012adaptive}; See Section~\ref{sec:experiment} for more detailed comparisons.


\begin{center}
	\begin{minipage}{0.95\linewidth}
		\begin{algorithm}[H]
			\caption{Accelerated proximal gradient (APG)} \label{alg:apg}
			{\noindent{\bf{Initial}}}: $\tau \in [0, 1],  \theta_0 = 1$, $\gamma = 1/L$ and $x_{0} \in \calH, x_{-1} = x_{0}$. \\
			\Repeat{convergence}{
				\textrm{Estimate the local strong convexity $\alpha_{k}$;} \vspace*{-1.0em} \\
				\beqn\label{eq:its-fpd}
				\begin{gathered}
				\theta_{k} ~~\textrm{solves}~~ \theta_{k}^2 =  (1-\theta_{k})\theta_{k-1}^2 + \tau \theta_{k}  , \enskip \textstyle \ak = \frac{\theta_{k-1}(1-\theta_{k-1})}{\theta_{k-1}^2+\theta_{k}}  ,  \\
				\yk = \xk + \ak\pa{\xk-\xkm}    ,  \\
				\xkp = \prox_{\gamma R}\Pa{\yk - \gamma \nabla F(\yk)} .
				\end{gathered}
				\eeqn
			}
		\end{algorithm}
	\end{minipage}
\end{center}

\section{Nesterov's accelerated scheme}\label{sec:nesterov}

In this section, we turn to Nesterov's accelerated gradient method \cite{nesterov2004introductory} and extend the above results to this scheme. In the book \cite{nesterov2004introductory}, Nesterov introduces several different acceleration schemes, in the following we mainly focus on the ``Constant Step Scheme, III''. Applying this scheme to solve \eqref{eq:min-problem}, we obtain the accelerated proximal gradient method (APG) described in Algorithm \ref{alg:apg}.

When the problem \eqref{eq:min-problem} is $\alpha$-strongly convex, then by setting $\tau = \sqrt{\alpha/L}$ and $\theta_0 = \tau$, we have
\[
\theta_k \equiv \tau \qandq
\ak \equiv \sfrac{ 1 - \sqrt{\gamma\alpha} }{ 1 + \sqrt{\gamma\alpha} }  ,
\] 
and the iterate achieves the optimal linear convergence speed, \ie $1 - \sqrt{\gamma\alpha}$, as we have already discussed in the previous sections. In the rest of this section, we first build connections between the parameter updates of APG with $\alpha$-FISTA, and then extend the lazy-start strategy to APG.

\subsection{Connection with $\alpha$-FISTA}\label{sec:connections}

Consider the following equation of $\theta$ parametrised by $0\leq \tau \leq \sigma \leq 1$, which recovers the $\theta_k$ update of APG for $\sigma = 1$,
\beq\label{eq:poly-theta}
\theta^2 + (\sigma\theta_{k-1}^2 - \tau) \theta - \theta_{k-1}^2 = 0 .
\eeq
The definition of $\ak$ implies $\theta_k \in [0, 1]$ for all $k\geq1$. Therefore, the $\theta_{k}$ we seek from above \eqref{eq:poly-theta} reads 
\beq\label{eq:thetak-APG-pq}
\textstyle \theta_{k} 
= \frac{ - (\sigma\theta_{k-1}^2-\tau) + \sqrt{(\sigma\theta_{k-1}^2-\tau)^2 + 4\theta_{k-1}^2 }}{2}	 .
\eeq
It is then easy to verify that $\theta_k$ is convergent and $\lim_{k\to+\infty}\theta_k = \sqrt{ \frac{\tau}{\sigma} }$.  
Back to \eqref{eq:thetak-APG-pq}, we have
\[
\textstyle \theta_{k} 
= \frac{ 2\theta_{k-1}^2}{ (\sigma\theta_{k-1}^2-\tau) + \sqrt{(\sigma\theta_{k-1}^2-\tau)^2 + 4\theta_{k-1}^2 } }  
= \frac{ 2}{ (\sigma-\tau/\theta_{k-1}^2) + \sqrt{(\sigma-\tau/\theta_{k-1}^2)^2 + 4 } }  .
\]
Letting $\tk = 1/\theta_{k}$ and substituting back to the above equation lead to
\beq\label{eq:tk-apg}
\textstyle \tk = \frac{ (\sigma-\tau\tkm^2) + \sqrt{(\sigma-\tau\tkm^2)^2 + 4\tkm^2 } }{ 2 }  .
\eeq
Note that the update rule \eqref{eq:luca} of \cite{calatroni2017backtracking} is a special case of above equation with $\sigma = 1$ and $\tau = \frac{\gamma\alpha}{1+\gamma\alpha_R}$. 
Moreover,
\beqn
\tk \to 
\left\{
\begin{aligned}
+\infty &: \tau = 0	,		\\
\sqrt{ \sfrac{\sigma}{\tau} }	&: \tau \in ]0, 1] .
\end{aligned}
\right.
\eeqn
Depending on the choices of $\sigma, \tau$, we have
\begin{itemize}
	\item When $(\sigma, \tau) = (1, 0)$, APG is equivalent to the original FISTA-BT scheme;
	\item When $(\sigma, \tau) = (1, {\gamma\alpha}{})$, APG is equivalent to \cite[Algorithm~1]{calatroni2017backtracking} for adapting to strong convexity. 
\end{itemize}
Building upon the above connection, we can extend the previous result of FISTA-Mod to the case of APG.


\subsection{A modified APG}

Extending the FISTA-Mod and $\alpha$-FISTA to the case of APG, we propose the following modified APG scheme which we name as ``APG-Mod''.

\begin{center}
	\begin{minipage}{0.95\linewidth}
		\begin{algorithm}[H]
			\caption{A modified APG scheme (\bf{APG-Mod})} \label{alg:mAPG}
			{\noindent{\bf{Initial}}}: Let $\sigma \in [0, 1], \gamma = 1/L$ and $\tau = \gamma\alpha\sigma, \theta_0 \in [0, 1]$. Set $x_{0} \in \calH, x_{-1} = x_{0}$. \\
			\Repeat{convergence}{
				\beq\label{eq:mapg}
				\begin{gathered}
				\theta_{k} ~~\textrm{solves}~~ \theta_k^2  =  (1- \sigma \theta_{k})\theta_{k-1}^2 + \tau \theta_k  , \\
				\textstyle \ak = \frac{\theta_{k-1}(1-\theta_{k-1})}{\theta_{k-1}^2+\theta_{k}}  ,  \\
				\yk = \xk + \ak\pa{\xk-\xkm}    ,  \\
				\xkp = \prox_{\gamma R}\Pa{\yk - \gamma \nabla F(\yk)} .
				\end{gathered}
				\eeq
			}
		\end{algorithm}
	\end{minipage}
\end{center}

\paragraph{Non-strongly convex case}
For the case $\Phi$ is only convex, we have $\tau = 0$, then $\theta_k$ is the root of the equation
\[
\theta^2 + \sigma \theta_{k-1}^2 \theta - \theta_{k-1}^2 = 0	.
\]
Owing to Section~\ref{sec:connections}, we have that APG-Mod is equivalent to FISTA-Mod with $p = \sigma$ and $q = \sigma^2$. Therefore, we have the following convergence result for APG-Mod which is an extension of Theorems~\ref{thm:rate-obj} and~\ref{thm:rate-seq}.

\begin{corollary} 
	
	For APG-Mod scheme Algorithm~\ref{alg:mAPG}, let $\tau = 0$ and $\sigma \in ]0, 1]$, then
	\begin{itemize}
		\item For the objective function value, 
		\[
		\Phi(\xk) - \Phi(\xsol) \leq   \sfrac{2L}{\sigma^2(k+1)^2} \norm{x_{0} - \xsol}^2  .
		\]
		If moreover $\sigma < 1$, we have $\Phi(\xk) - \Phi(\xsol) = o(1/k^2)$. 
		\item Let $\sigma < 1$, then there exists an $\xsol \in \Argmin(\Phi)$ to which the sequence $\seq{\xk}$ converges weakly and $\norm{\xk-\xkm} = o(1/k)$.  
	\end{itemize}
	
\end{corollary}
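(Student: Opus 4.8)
The plan is to prove the corollary by \emph{reducing it to the FISTA-Mod results already established}, via the change of variables of Section~\ref{sec:connections}. When $\tau = 0$ the $\theta_k$-recursion of Algorithm~\ref{alg:mAPG} becomes $\theta_k^2 = (1-\sigma\theta_k)\theta_{k-1}^2$, so that with $t_k \eqdef 1/\theta_k$ (and $\theta_0 = 1$, giving $t_0 = 1$ as FISTA-Mod requires) one gets $t_k^2 - \sigma t_k = t_{k-1}^2$, i.e. $t_k = \tfrac{\sigma+\sqrt{\sigma^2+4t_{k-1}^2}}{2}$ --- precisely the $t_k$-update of \eqref{eq:fista-mod} with $p = \sigma$, $q = \sigma^2$, $r = 4$ (this is the $\tau=0$ case of \eqref{eq:tk-apg}). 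First I would complete the identification of the two iterations by rewriting the extrapolation step of mAPG in the $t$-variables: expressing $a_k = \tfrac{\theta_{k-1}(1-\theta_{k-1})}{\theta_{k-1}^2+\theta_k}$ through $t_{k-1},t_k$ and simplifying the denominator by means of $t_{k-1}^2 = t_k^2 - \sigma t_k$, so that the updates $y_k = x_k + a_k(x_k-x_{k-1})$ and $x_{k+1} = \prox_{\gamma R}(y_k-\gamma\nabla F(y_k))$ become literally an instance of FISTA-Mod \eqref{eq:fista-mod}. Once this is in place, the corollary follows directly from Theorems~\ref{thm:rate-obj} and~\ref{thm:rate-seq}.

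The remaining step is merely to check that $(p,q,r) = (\sigma,\sigma^2,4)$ meets the hypotheses. Since $\sigma \in ]0,1]$ we have $p = \sigma \in ]0,1]$ and $q = \sigma^2 > 0$, and from $0 < \sigma \le 2-\sigma$ we get $q = \sigma^2 \le (2-\sigma)^2$, which is exactly condition~\eqref{eq:q-ineq}; thus Theorem~\ref{thm:rate-obj} gives $\Phi(\xk)-\Phi(\xsol) \le \tfrac{2L}{p^2(k+1)^2}\norm{x_0-\xsol}^2 = \tfrac{2L}{\sigma^2(k+1)^2}\norm{x_0-\xsol}^2$, the first claim. When in addition $\sigma < 1$, then $p = \sigma \in ]0,1[$ and $q = \sigma^2 = p^2 \in [p^2,(2-p)^2]$, so the second part of Theorem~\ref{thm:rate-obj} yields $\Phi(\xk)-\Phi(\xsol) = o(1/k^2)$, and Theorem~\ref{thm:rate-seq}, whose hypotheses are precisely $r=4$, $p \in ]0,1[$, $q \in [p^2,(2-p)^2]$, yields weak convergence of $\seq{\xk}$ to some $\xsol \in \Argmin(\Phi)$ together with $\norm{\xk-\xkm} = o(1/k)$.

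The main obstacle I anticipate is that first step: making the equivalence between mAPG with $\tau=0$ and FISTA-Mod \emph{exact}, and not just at the level of the scalar sequence $\seq{t_k}$ --- the $t_k$-recursion transfers verbatim, but $a_k$ is presented in the $\theta$-variables in a form that must be massaged, using the $\theta_k$-identity, into $\tfrac{t_{k-1}-1}{t_k}$ before Theorems~\ref{thm:rate-obj}--\ref{thm:rate-seq} apply word for word. If that identification requires care, a robust alternative is to re-establish Theorems~\ref{thm:rate-obj}--\ref{thm:rate-seq} directly for Algorithm~\ref{alg:mAPG}: all the scalar ingredients they rely on --- the lower bound $t_k \ge (k+1)p/2$ (Lemma~\ref{lem:tk-lower-bound}), the inequality $t_k^2 - t_k \le t_{k-1}^2$, the divergence $t_{k-1}^2-(t_k^2-t_k)\to+\infty$ (Lemma~\ref{lem:tk-diff-bound}), the linear upper bound on $t_k$ (Lemma~\ref{lem:tk-upper-bound}) and the summability in Lemma~\ref{lem:sum-betajk} --- depend only on the $t_k$-recursion with $(p,q) = (\sigma,\sigma^2)$, which is unchanged; only the telescoping Pythagoras estimate in the proof of Theorem~\ref{thm:rate-obj} would then need re-examination for the precise $a_k$ of mAPG.
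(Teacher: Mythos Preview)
Your plan is exactly the paper's: the paper's entire argument is the sentence preceding the corollary --- ``mAPG is equivalent to FISTA-Mod with $p=\sigma$ and $q=\sigma^2$'', after which Theorems~\ref{thm:rate-obj} and~\ref{thm:rate-seq} are invoked. Your parameter checks ($q=\sigma^2\le(2-\sigma)^2$, and $q=p^2\in[p^2,(2-p)^2]$ when $\sigma<1$) are the right ones and are not spelled out in the paper.

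The obstacle you anticipate is real, and the paper does not address it. If you carry the substitution $t_k=1/\theta_k$ into the mAPG extrapolation factor and use $t_{k-1}^2=t_k^2-\sigma t_k$, you obtain
\[
a_k \;=\; \frac{\theta_{k-1}(1-\theta_{k-1})}{\theta_{k-1}^2+\theta_k}
\;=\; \frac{(t_{k-1}-1)\,t_k}{t_k+t_{k-1}^2}
\;=\; \frac{t_{k-1}-1}{\,t_k+(1-\sigma)\,},
\]
which equals the FISTA-Mod value $\tfrac{t_{k-1}-1}{t_k}$ only when $\sigma=1$. So for $\sigma<1$ the two iterations are \emph{not} literally the same scheme, and the telescoping identity $t_k y_k-(t_k-1)x_k = t_{k-1}x_k-(t_{k-1}-1)x_{k-1}$ used in the proof of Theorem~\ref{thm:rate-obj} (which is exactly $t_k a_k=t_{k-1}-1$) fails as written. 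The paper's reduction therefore glosses over this point; your fallback --- rerunning the Lyapunov/Pythagoras argument directly for mAPG's $a_k$ --- is what a rigorous proof would require, and neither your proposal nor the paper supplies those details.
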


\begin{remark}
	Given the correspondence between $\sigma $ of APG-Mod and $p$ of FISTA-Mod, owing to Proposition~\ref{prop:lazy-start}, we obtain the lazy-start APG-Mod by choosing $\sigma \in [\frac{1}{80}, \frac{1}{10}]$. 
\end{remark}

\paragraph{Strongly convex case}

When the problem \eqref{eq:min-problem} is strongly convex with modulus $\alpha >0$, as $\tau = {\gamma\alpha\sigma}$, then according to Section~\ref{sec:connections}, we have
\[
\theta_k \to \sqrt{ \qfrac{\tau}{\sigma} } = \sqrt{\gamma\alpha} \qandq
\ak \to \sfrac{ 1 - \sqrt{\gamma\alpha} }{ 1 + \sqrt{\gamma\alpha} }	,
\]
which means that APG-Mod achieves the optimal convergence rate $1 - \sqrt{\gamma\alpha}$.

\begin{remark}
We can also extend the Rada-FISTA to APG, we shall forgo the details here as it is rather trivial. 
\end{remark}


\section{Numerical experiments}\label{sec:experiment}

Now we present numerical experiments on problems arising from inverse problems, signal/image processing, machine learning and computer vision to demonstrate the performance of the proposed schemes. 
Throughout this section, the following schemes and corresponding settings are considered:
\begin{itemize}
	\item The original FISTA-BT scheme \cite{fista2009};
	\item The proposed FISTA-Mod (Algorithm~\ref{alg:fista-mod}) with $p = 1/20$ and $q = 1/2$, \ie the lazy-start strategy;
	\item The restarting FISTA of \cite{o2012adaptive};
	\item The Rada-FISTA scheme (Algorithm~\ref{alg:rada-fista});
	\item The greedy FISTA  (Algorithm~\ref{alg:greedy}) with $\gamma = 1.3/L, S = 1$ and $\xi = 0.96$.
\end{itemize}
The $\alpha$-FISTA (Algorithm~\ref{alg:alpha-fista}) is not considered here, except in Section~\ref{sec:lse-continue},  since most of the problems considered are only locally strongly convex along certain direction \cite{liang2017activity}. 
The corresponding MATLAB source code for reproducing the experiments is available at: \url{https://github.com/jliang993/Faster-FISTA}.

All the schemes are running with same initial point, which is $x_0 = \mathbf{1}\times 10^{4}$ for the least square problem and $x_0 = \mathbf{0}$ for all other problems. 
In terms of comparison criterion, we mainly focus on $\norm{\xk-\xsol}$ where $\xsol \in \Argmin(\Phi)$ is a global minimizer of the optimization problem.

\subsection{Least square \eqref{eq:lse} continue}\label{sec:lse-continue}

First we continue with the least square estimation \eqref{eq:lse} discussed in Section~\ref{sec:lazy}, and present a comparison of different schemes in terms of both $\norm{\xk-\xsol}$ and $\Phi(\xk)-\Phi(\xsol)$. Since this problem is strongly convex, the optimal scheme (\ie $\alpha$-FISTA) is also considered for comparison.

The obtained results are shown in Figure~\ref{fig:cmp-lse-continue}, with $\norm{\xk-\xsol}$ on the left and $\Phi(\xk)-\Phi(\xsol)$ on the right. From these comparisons, we obtain the following observations: 
\begin{itemize}
	\item FISTA-BT is faster than FISTA-Mod for $k \leq 3\times 10^{5}$, and becomes increasing slow afterwards. This agrees with our discussion in Figure~\ref{fig:Ek-d} that each parameter choice (of $p$ and $q$, and $d$ for FISTA-CD) is the fastest for a certain accuracy;
	\item 
	$\alpha$-FISTA is the only scheme whose performance is monotonic in terms of both $\norm{\xk-\xsol}$ and $\Phi(\xk)-\Phi(\xsol)$. It is also faster than both FISTA-BT and FISTA-Mod;
	\item
	The three restarting adaptive schemes are the fastest among tested schemes, with Greedy FISTA being faster than the other two. 
\end{itemize}

\begin{figure}[!ht]
\centering
\subfloat[$\norm{\xk-\xsol}$]{ \includegraphics[width=0.375\linewidth]{./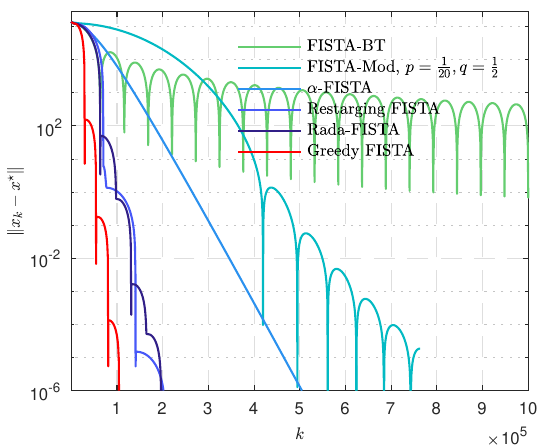} }  \hspace{24pt}
\subfloat[$\Phi(\xk)-\Phi(\xsol)$]{ \includegraphics[width=0.375\linewidth]{./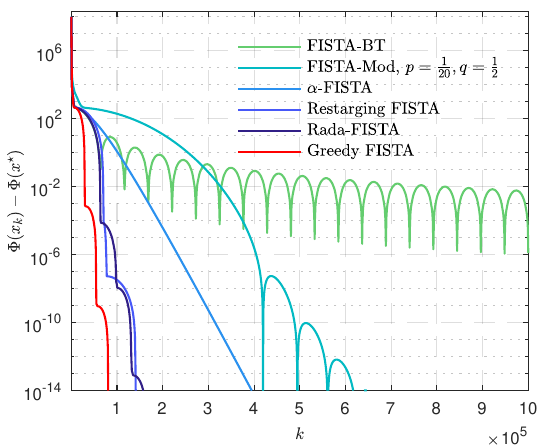} }  \\
\caption{Comparison of different FISTA schemes for least square problem \eqref{eq:lse}. }
\label{fig:cmp-lse-continue}
\end{figure}

\subsection{Linear inverse problem and regression problems}

From now on, we turn to dealing with problems that are only locally strongly convex around the solution of the problem. We refer to \cite{liang2017activity} for a detailed characterization of such local neighborhoods.

\paragraph{Linear inverse problem} 
Consider the following regularised least square problem
\beq \label{eq:P-lambda}
\min_{x \in \bbR^n} \mu R(x)  + \sfrac{1}{2} \norm{\calK x - f}^2  ,
\eeq
where $\mu > 0$ is trade-off parameter, $R$ is the regularization function. The forward model of \eqref{eq:P-lambda} reads
\beq\label{eq:observation}
f = \calK \xob + w   ,
\eeq
where $\xob \in \bbR^n$ is the original object that obeys certain prior (\eg sparsity and piece-wise constant), $f \in \bbR^m$ is the observed data, $\calK : \bbR^n \to \bbR^m$ is some linear operator, and $w \in \bbR^m$ stands for noise. 
In the experiments, we consider $R$ being $\ell_{\infty}$-norm and total variation \cite{rudin1992nonlinear}.  Here $\calK$ is generated from the standard Gaussian ensemble and the following setting is considered:
\begin{description}[leftmargin=3.5cm]
\item[{$\ell_{\infty}$-norm}] $(m, n) = (1020, 1024)$, $\xob$ has $32$ saturated entries;
\item[{Total variation}] $(m, n) = (256, 1024)$, $\nabla \xob$ is $32$-sparse.
\end{description}

\paragraph{Sparse logistic regression}
A sparse logistic regression problem for binary classification is also considered. Let $(h_i, l_i) \in \bbR^n \times \{\pm 1\} ,~ i=1,\cdots,m$ be the training set, where $h_{i} \in \bbR^n$ is the feature vector of each data sample, and $l_{i}$ is the binary label. 
%
The formulation of sparse logistic regression reads
\beq\label{eq:lr}
\min_{x \in \bbR^n }  \mu \norm{x}_{1} + \sfrac{1}{m} \msum_{i=1}^m \log\Pa{ 1+e^{ -l_{i} h_{i}^T x } }  . 
\eeq
%
The \texttt{australian} data set from LIBSVM\footnote{\url{https://www.csie.ntu.edu.tw/~cjlin/libsvmtools/datasets/}} is considered.

\begin{figure}[!ht]
	\centering
	\subfloat[$\ell_{\infty}$-norm]{ \includegraphics[width=0.32\linewidth]{./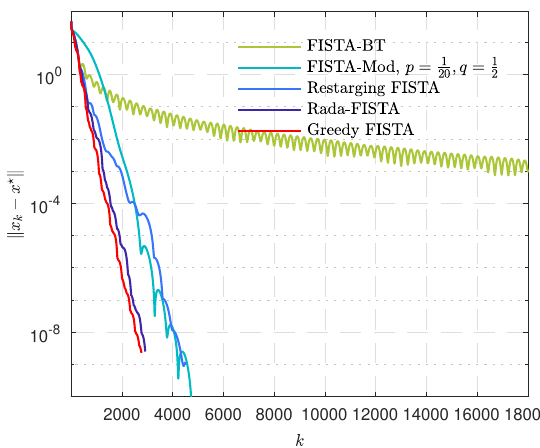} }  
	\subfloat[Total variation]{ \includegraphics[width=0.32\linewidth]{./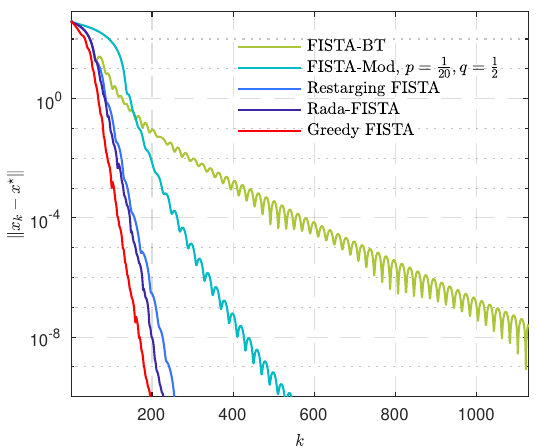} }  
	\subfloat[Sparse logistic regression]{ \includegraphics[width=0.32\linewidth]{./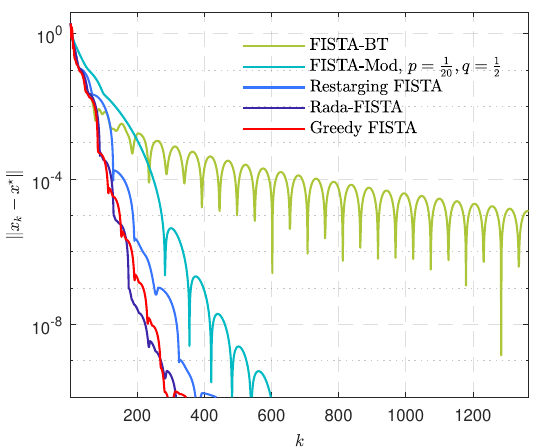} }  \\
	\caption{Comparison of different FISTA schemes for linear inverse problems and sparse logistic regression.}
	\label{fig:cmp-ip-slr}
\end{figure}

The observations are shown in Figure~\ref{fig:cmp-ip-slr}. 
Although these problems are only locally strongly convex around the solution, the observations are quite close to those of least square problem discussed above:
\begin{itemize}
	\item The lazy-start FISTA-Mod is slower than FISTA-BT at the beginning, and eventually becomes much faster, as predicted. For the $\ell_{\infty}$-norm, it is more than $10$ times faster if we need the precision to be $\norm{\xk-\xsol} \leq 10^{-10}$;
	\item The restarting adaptive schemes are the fastest ones, and the Greedy FISTA is the fastest of all.
\end{itemize}

\subsection{Principal component pursuit}

Lastly, we consider the principal component pursuit (PCP) problem \cite{candes2011robust}, and apply it to decompose a video sequence into background and foreground.

Assume that a real matrix $f \in\bbR^{m\times n}$ can be written as
\[
f = \xobl + \xobs + w,
\]
where $\xobl$ is low--rank, $\xobs$ is sparse and $w$ is the noise. 
The PCP proposed in \cite{candes2011robust} attempts to recover/approximate $(\xobl,\xobs)$ by solving the following convex optimization problem
\beq
\label{eq:rpca}
\min_{\xl, \xs \in\bbR^{m\times n}}~ \sfrac{1}{2}\norm{f-\xl-\xs}_{F}^2 + \mu \norm{\xs}_1 + \nu \norm{\xl}_*  ,
\eeq
where $\norm{\cdot}_F$ is the Frobenius norm. 
Observe that for fixed $\xl$, the minimizer of \eqref{eq:rpca} is $\xs^\star=\prox_{\mu{\norm{\cdot}_1}}(f - \xl)$. Thus, \eqref{eq:rpca} is equivalent to
\beq
\label{eq:rpcame}
\min_{\xl\in\bbR^{m\times n}}~^1\Pa{\mu \norm{\cdot}_1}(f-\xl) + \nu \norm{\xl}_*  ,
\eeq
where $^1\Pa{{\mu \norm{\cdot}_1}}(f-\xl) = \min_{z} \frac{1}{2}\norm{f-\xl-z}_F^2+ \mu\norm{z}_1$ is the Moreau Envelope of $\mu {\norm{\cdot}_1}$ of index $1$, and hence has $1$-Lipschitz continuous gradient.

We use the video sequence from \cite{li2004statistical} and the obtained result is demonstrated in Figure \ref{fig:cmp-pcp}. Again, we obtain consistent observations with the above examples. Moreover, the performance of lazy-start FISTA-Mod is very close to the restarting adaptive schemes.

\begin{figure}[!ht]
\centering
\subfloat[Original frame]{ \includegraphics[width=0.225\linewidth]{./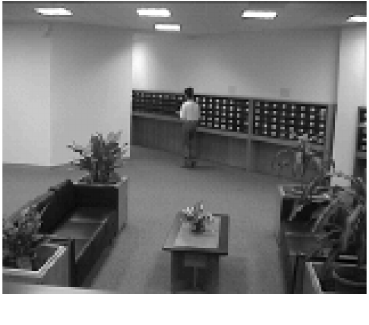} }  
\subfloat[Sparse component]{ \includegraphics[width=0.225\linewidth]{./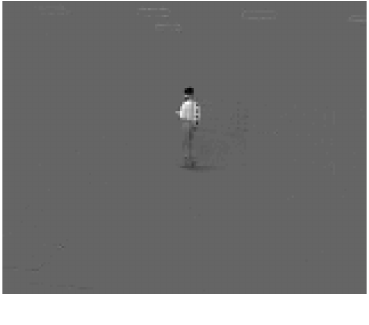} }  
\subfloat[Low-rank component]{ \includegraphics[width=0.225\linewidth]{./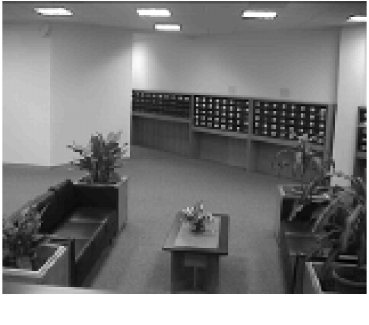} }  
\subfloat[Performance comparison]{ \includegraphics[width=0.245\linewidth]{./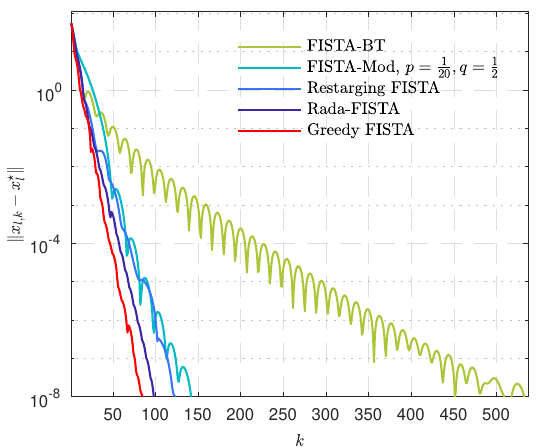} }  \\
\caption{Comparison of different FISTA schemes for principal component pursuit problem. (a) original frame; (b) foreground; (c) background; (d) performance comparison. }
\label{fig:cmp-pcp}
\end{figure}

In all these experiments we find that the proposed variants can perform better than the original versions but restarting are consistently faster. Greedy FISTA was the best in every example shown.

\section{Conclusions}

We proposed a simple modification to the original FISTA-BT scheme, which allows us to prove the convergence of the sequence generated by the modified scheme. We also proposed a lazy-start strategy which can greatly improve the practical performance of FISTA schemes. Several adaptive schemes were also developed, which can adaptively adjust to the (local) properties of the problem to solve. The performances of the proposed schemes were verified on various problems arising from inverse problems, data science and computer vision.

\subsection*{Acknowledgement}
We would like to thank Dr. Robert Tovey for helpful discussions and comments of the paper. 
JL acknowledges support from the Leverhulme Trust and Newton Trust. 
CBS acknowledges support from the Leverhulme Trust project on Breaking the Non-Convexity Barrier, and on Unveiling the Invisible, the Philip Leverhulme Prize, the EPSRC grant No. EP/S026045/1, EPSRC grant No. EP/M00483X/1, and EPSRC Centre No. EP/N014588/1, the European Union Horizon 2020 research and innovation programmes under the Marie Skłodowska-Curie grant agreement No. 691070 CHiPS and the Marie Skłodowska-Curie grant agreement No 777826, the Cantab Capital Institute for the Mathematics of Information, and the Alan Turing Institute.

\begin{small}
\bibliographystyle{plain}
\bibliography{ifb}
\end{small}

\end{document}